\theoremstyle{plain}
\newtheorem{theorem}{Theorem}[section]
\newtheorem{lemma}[theorem]{Lemma}
\newtheorem{proposition}[theorem]{Proposition}
\newtheorem{corollary}[theorem]{Corollary}
\newtheorem{definition}[theorem]{Definition}
\newtheorem{example}[theorem]{Example}
\newtheorem{remark}[theorem]{Remark}
\newtheorem{question}[theorem]{Question}
\newcommand{\N}{\mathbb{N}}  
\newcommand{\Z}{\mathbb{Z}}  
\newcommand{\R}{\mathbb{R}}  
\newcommand{\C}{\mathbb{C}}  
\newcommand{\K}{\mathbb{K}}  
\newcommand{\cC}{\mathcal{C}}
\newcommand{\cE}{\mathcal{E}}
\newcommand{\cS}{\mathcal{S}}
\definecolor{bgreen}{rgb}{0.13, 0.55, 0.13}
\renewcommand{\le} {\leqslant}
\renewcommand{\ge} {\geqslant}
\begin{document}

\title{Shift operators and their classification}

\author{Maria Carvalho  \thanks{Partially supported by CMUP, member of LASI, which is financed by national funds through FCT -- Funda\c c\~ao para a Ci\^encia e a Tecnologia, I.P., under the projects with references UIDB/00144/2020 and UIDP/00144/2020, and also supported by the project PTDC/MAT-PUR/4048/2021.} \\ 
\and 
Udayan B. Darji \thanks{Thanks partial support of CMUP and Jagiellonian University  as Jagiellonian Scholar.}\\
\and  
Paulo Varandas 
\thanks{Partially supported by the grant CEECIND/03721/2017 of the Stimulus of Scientific Employment, Individual Support 2017 Call, awarded by FCT.}
}


\date{\today}


\maketitle 


\begin{abstract} 
We introduce a class of linear bounded invertible operators on Banach spaces, called shift operators, which comprises weighted backward shifts and models finite products of weighted backward shifts and dissipative  composition operators. We classify vast families of these shift operators, 
and this classification yields verifiable conditions which we use to construct concrete examples of shift operators with a variety of dynamical properties. As a consequence, we show that, for large classes of shift operators, generalized hyperbolicity is equivalent to  the shadowing property.
\footnote{2020 {\em Mathematics Subject Classification:} Primary 37C50, 47B37, 37B65; Secondary 47B33, 47B01, 

 47A15, 
 37C60, 
 37D99,
 47B80
 
{\em Keywords:} Shift operators, weighted shifts, dissipative operators, generalized hyperbolicity, shadowing}
\end{abstract}

\scriptsize
\normalsize

\section{Introduction}
Linear dynamics is the study dynamical properties of linear operators on infinite dimensional Banach spaces. One of its aims is to classify dynamical properties of linear operators. It is often a non-trivial problem to decide, given a fixed Banach space $X$, if it indeed admits linear operators with dynamical properties such as transitivity, mixing, chaoticity, frequent hypercyclicity, non-trivial invariant measures, etc. For example, it was shown by Ansari \cite{AaronsonMSM1997} and Bernal \cite{Bernal1999PAMS} that every infinite dimensional separable Banach space admits a mixing operator. This was generalized to Fre\'chet spaces by Bonet and Peris, using a key idea of Salas \cite{SalasTAMS1995} which states that transitivity persists under perturbation of the identity by weighted backward shifts.

\smallskip

It is also an important problem to determine whether well-known results of classical dynamics in the setting of compact metric spaces hold in the setting of Banach spaces. For example, one can construct concrete examples which distinguishes the dynamical properties of transitivity, weak mixing, mixing and chaoticity by considering weighted backward shifts. In fact, one characterizes weighted backward shifts with a given dynamical property and then it is simply a matter of choosing proper weights to distinguish these properties. 

\smallskip

The class of weighted backward shifts is also dispensable when studying hyperbolic linear dynamics. For example, what is the relationship between hyperbolicity and the shadowing property? It is well-known that, in finite dimensional Banach spaces, hyperbolicity is equivalent to the shadowing property \cite{OmbachUIAM1994}. However, it was an open problem whether the shadowing property implies hyperbolicity in infinite dimensional spaces. This was settled negatively \cite{BernardesCiriloDarjiMessaoudiPujalsJMAA2018} by constructing a weighted backward shift with such properties. In the process, a new notion of hyperbolicity arose. This notion was extracted from \cite{CiriloGollobitPujals2020} and named generalized hyperbolicity. It turns out that generalized hyperbolicity implies shadowing. However, the converse remains open. A characterization of weighted backward shifts with the shadowing property was given in \cite{BernardesMessaoudiETDS2020}, essentially yielding that for weighted backward shifts, the shadowing property is equivalent to generalized hyperbolicity. It is also known that hyperbolicity is equivalent to expansivity and the shadowing property \cite{BernardesMessaoudiETDS2020,CiriloGollobitPujals2020}.

\smallskip

One can infer from the above that weighted backward shifts are essential in the study of linear dynamics, in the same spirit as symbolic dynamics is essential to the study of topological dynamics and ergodic theory. Despite their immense utility, weighted backward shifts have their shortcomings. For example, weighted backward shifts on $\ell_p$ spaces fail to distinguish between chaoticity and frequent hypercyclicity \cite{BayarRuzsa2015}. To remedy this, they have been generalized in various directions. One of them is a recent investigation of composition operators on $L^P(X, \mu)$, also known as Koopman operators, induced by a bimeasureable map $f\colon L^P(X,\mu) \rightarrow L^P(X,\mu)$. More precisely, when the Radon-Nikodym derivative $\frac{d \,(\mu \circ f)}{d\,\mu}$ is bounded below, the map $ T_f\colon L^p(X, \mu) \rightarrow L^p(X, \mu)$ given by $T(\varphi) = \varphi \circ f$ is a bounded invertible linear operator. This class of composition operators captures weighted backward shifts and numerous results which characterize dynamical properties of $T_f$ have been established. For example, it has been characterized when $T_f$ is transitive or mixing \cite{BayartDarjiPiresJMAA2018} or $T_f$ Li-Yorke chaotic \cite{BernardesDarjiPiresMM2020}. Under the assumption of $f$ being dissipative and satisfying a condition called bounded distortion, it is also possible to characterize when $T_f$ is chaotic, frequently hypercyclic \cite{DarjiPiresProcEdinburgh} and having the shadowing property \cite{DAnielloDarjiMaiurielloJDE}. In fact, under those two assumptions, 
one can show that $T_f$ admits a particular weighted backward shift $B_\omega$ as a factor. Moreover, $T_f$ has one of numerous dynamical properties if and only if $B_\omega$ does \cite{DarioEmmaMartinaJMAA}.

\smallskip

Another important and well-studied generalization of weighted backward shifts on $\ell_p$ is when the underlying space is more general, for example a K\"{o}the space or a Fr\'{e}chet space. Very recently, an intriguing study has been taken up where weighted shifts on directed trees are investigated \cite{GrosseErdmanPAPA2023}.

\smallskip

In this article, we generalize weighted backward shifts in a different direction. We start with a Banach space $X$ and a sequence of suitable linear invertible bounded operators $(S_n)_{n\,\in\, \Z}$ on $X$. We study the weighted shift induced by such operators. Precise definitions are given in Section~\ref{se:main}. In particular, we show that several of the well-studied operators, such as dissipative Koopman operators, fall under the umbrella of our setting. It was shown \cite{DarioEmmaMartinaJMAA} that a shift-like operator admits a weighted shift as a factor; moreover, it was proved that such an operator satisfies a given property $\mathcal P$ if and only if its weighted backward shift factor does, where $\mathcal P$ is one of numerous properties, such as shadowing, expansivity, transitivity, mixing, etc. We will show that our framework also includes this class of operators, meaning that all shift-like operators are conjugate to shift operators.

\smallskip

We classify vast families of these shift operators, including the ones generated by orthogonal matrices, diagonalizable matrices, rotation matrices and hyperbolic matrices. This classification yields verifiable conditions which we use to construct concrete examples of shift
operators with a variety of dynamical properties. 

\smallskip

As stated earlier, it is still an open question whether the shadowing property implies generalized hyperbolicity for bounded invertible linear operators. We show such is the case for a large class of shift operators. 

\smallskip

We would like to point out some  recent developments related to our work. Menet and Papathanasiou investigate weighted shifts on $\ell_p$-sums and $c_0$-sums \cite{menetpapa}. Bernardes and Peris \cite{BernardesPerisAIM2024} carried out a detailed study of various types of shadowing for operators defined on Banach spaces and Fr\'echet spaces. In particular, they showed that finite shadowing and shadowing are not the same for operators on Fr\'echet spaces, although they are equivalent for operators on Banach spaces. Actually, the latter turns out to be more difficult to prove than in the setting of compact spaces. It is still not obvious what the definitions of expansivity and hyperbolicity should be for operators on Fr\'echet spaces. An investigation along this direction is carried out by Bernardes, et al \cite{bernardes2024generalizedhyperbolicitystabilityexpansivity}.

\smallskip

The paper is organized as follows. In Section~\ref{se:main}, we introduce the basic terminology and state the main mathematical contributions and their consequences. In Section~\ref{se:appl} we give applications of our results, including some concrete examples which show how to construct operators with or without certain dynamical properties. Section~\ref{se:proofs} consists of the proofs of the main results. Section~\ref{se:open} states some open problems of interest. 


 \section{Main Results}\label{se:main}

In what follows, given $1 \leq p < +\infty$ and a vector space $Y$ over the field $\K = \R$ or $\C$, we denote by $\ell_p(Y)$ the space of bilateral sequences of elements of $Y$ endowed with the $p-$norm.
\smallskip

Let $X$ be a Banach space and $\mathcal{S} = (S_n)_{n\,\in\,\Z}$ be a sequence of linear bounded invertible operators on $X$ for which there exists $C > 0$ such that 
\begin{equation*}\label{eq:maxnorm}
\max_{n \, \in \, \Z} \, \big\{\|S_n\|, \, \|S_n^{-1}\|\big\} \, < \, C.
\end{equation*}
Given a Banach space ${\bf B} \subset X^\mathbb Z$, the \emph{shift operator} on $\bf B$ generated by $\mathcal{S}$ is the linear bounded invertible operator $\sigma_{\mathcal S}\colon {\bf B} \to {\bf B}$ defined by 
\begin{equation}\label{def:operatorshift}
\sigma_{\mathcal S}\big((x_n)_{n\,\,\in\,\Z}\big) \,=\, \,\big(S_{n+1}(x_{n+1})\big)_{n\,\in\,\Z}.
\end{equation}
This class of operators includes the weighted backward shift. For example, when $X =\R$ and $S_n\colon \R \rightarrow \R $ is given by $S_n(x) = \omega_n x$ for a sequence $\omega = (\omega_n)_{n \, \in \, \Z}$ which is bounded and far away from zero, then we obtain the weighted backward shift that we denote by $B_{\omega}$. 

 \subsection{General properties}

In this section we start illustrating the relevance of the shift operators by describing two classes of linear operators which are modeled by them.

\begin{definition}[Dissipative operators]\label{def:coordinates_projections}
Let $(E_n)_{n\,\in\, \mathbb Z}$ be a collection of closed subspaces of a Banach space $X$. We say that $(E_n)_{n\,\in \,\mathbb Z}$ is a \emph{dissipative decomposition of $X$} if, given $x\in X$, for every $n \in \mathbb Z$ there is $x_n\in E_n$ such that
\begin{itemize}
 \item[(a)] the sums $x^+ = \sum_{n=0}^{+\infty} x_n$ and $x^- = \sum_{n=-\infty}^{-1} x_n$ converge;
 \item[(b)] $x = x^-+x^+$ and this representation is unique;
 \item[(c)] for every $n\in \mathbb Z$, the projection $p_n\colon X \to E_n$, given by $p_n(x) = x_n$, is continuous.
 \end{itemize}

When the previous conditions are valid we write $X = \overline{\text{span }(\bigoplus_{n\,\in\, \mathbb Z} E_n)}$. A linear operator $T\colon X\to X$ is said to be \emph{dissipative} if there exists a closed subspace $E_0\subset X$ such that the collection
$(T^n(E_0))_{n\,\in\, \mathbb Z}$ is a dissipative decomposition of $X$. 
\end{definition}

Dissipative decompositions of Banach spaces appear naturally in the context of linear dynamics. 
This is the case of weighted backward shifts on $\ell_p-$spaces, for $1 \leq p < +\infty$ (cf. \cite{BernardesCiriloDarjiMessaoudiPujalsJMAA2018}), shift-like operators (cf.
\cite{DarioEmmaMartinaJMAA}) and generalized hyperbolic operators on Hilbert spaces with a dense set of periodic orbits (see \cite[Theorem~5]{CiriloGollobitPujals2020}).

\begin{theorem}\label{thm:dissipative2shifts}
Suppose that $T\colon X\to X$ is a dissipative bounded invertible linear operator on a Banach space $X$. Then $T$ is conjugate to a shift operator $\sigma_S\colon \bf B \to \bf{B}$, where $\bf B$ is a Banach space, $\mathcal S = (S_n)_{n\,\in\, \mathbb Z}$ and $S_n$ is the identity map for every $n \in \mathbb Z$. 
\end{theorem}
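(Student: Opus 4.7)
The plan is to produce a linear conjugacy $\Phi\colon X\to\mathbf{B}$ directly from the dissipative decomposition, where $\mathbf{B}\subset X^{\mathbb{Z}}$ is the image of $\Phi$ equipped with the transported norm. Writing $E_n=T^n(E_0)$ and denoting by $p_n\colon X\to E_n$ the continuous projections supplied by Definition~\ref{def:coordinates_projections}, I would set
$$\Phi(x) \,:=\, \bigl(T^n(p_{-n}(x))\bigr)_{n\in\mathbb{Z}}.$$
Because $p_{-n}(x)\in E_{-n}=T^{-n}(E_0)$, each entry $T^n(p_{-n}(x))$ actually lies in $E_0$, so $\Phi$ lands in $E_0^{\mathbb{Z}}\subset X^{\mathbb{Z}}$. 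Linearity is immediate from the linearity of $T^n$ and $p_{-n}$. For injectivity, if $\Phi(x)=0$ then invertibility of $T^n$ forces $p_{-n}(x)=0$ for every $n\in\mathbb{Z}$, and the uniqueness of the decomposition $x=x^-+x^+$ in the definition of a dissipative decomposition yields $x=0$.

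Next I would verify the intertwining $\Phi\circ T = \sigma_{\mathcal S}\circ \Phi$ when $S_n=\mathrm{Id}_X$ for every $n$. Since $T(E_k)=E_{k+1}$, the uniqueness of the decomposition applied to $T(x)=\sum_k T(p_k(x))$ gives $p_m(T(x))=T(p_{m-1}(x))$, and therefore
$$\Phi(T(x))_n \,=\, T^n\bigl(p_{-n}(T(x))\bigr) \,=\, T^{n+1}\bigl(p_{-(n+1)}(x)\bigr) \,=\, \Phi(x)_{n+1},$$
which is precisely $\sigma_{\mathcal S}(\Phi(x))_n$ according to the formula~(\ref{def:operatorshift}) under the convention $S_n=\mathrm{Id}_X$.

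I would then define $\mathbf{B}:=\Phi(X)$ and endow it with the transported norm $\|\Phi(x)\|_{\mathbf{B}}:=\|x\|_X$. By construction, $\Phi$ is an isometric linear bijection $X\to\mathbf{B}$, so $\mathbf{B}$ is a Banach space, and the intertwining above realizes the topological conjugacy of $T$ with $\sigma_{\mathcal S}$; boundedness and invertibility of $\sigma_{\mathcal S}$ on $\mathbf{B}$ are inherited from those of $T$ on $X$, while the uniform bound $\sup_n\max\{\|S_n\|,\|S_n^{-1}\|\}<\infty$ is trivial since each $S_n$ is the identity. The only genuine pitfall I foresee is bookkeeping with the shift conventions: the asymmetric choice $\Phi(x)_n=T^n(p_{-n}(x))$, rather than the more symmetric-looking $T^{-n}(p_n(x))$, is precisely what produces the backward shift of~(\ref{def:operatorshift}) instead of its inverse.
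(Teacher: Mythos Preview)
Your proof is correct and follows essentially the same approach as the paper: the map $\Phi(x)=(T^n(p_{-n}(x)))_{n\in\mathbb Z}$ is exactly the paper's conjugacy $H$, the transported norm on $\mathbf{B}=\Phi(X)$ is defined identically, and the intertwining computation matches. If anything, your argument is slightly cleaner, since the paper invokes the Open Mapping Theorem where you correctly observe that $\Phi$ is an isometry by construction.
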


The proof of Theorem~\ref{thm:dissipative2shifts} does not provide, in general, an explicit description of the Banach space where the shift operator is defined. However, under additional assumptions or within specific frameworks, we may succeed in describing the underlying space. One such framework is the class of dissipative composition operators that were introduced in \cite{DAnielloDarjiMaiurielloJDE,DarjiPiresProcEdinburgh}. It can be easily verified that the definition of dissipative composition operator given below is consistent with our definition of dissipative operators defined above. 

\begin{definition}[Dissipative composition operators]\label{def:shift-like}
Let $f\colon M\to M$ be a bijective bimeasurable map on a $\sigma$-finite measure space $(M, \mathfrak{B}, \mu)$ such that 
the Radon-Nikodym derivative $\frac{d\mu f}{d\mu}$ exists and is bounded from below and from above. This assumption guarantees that the \emph{Koopman operator} $T_f$ induced by $f$, acting on $L^p(\mu)$, for some 
$1 \le p < +\infty$, by
$$\varphi \in L^p(\mu) \quad \quad \mapsto \quad \quad T_f(\varphi) = \varphi\circ f$$
is well-defined, invertible and  $T_f^{-1} = T_{f^{-1}}$.
In case $M = \dot {\bigcup} _{k=-\infty}^{+ \infty} \,f^k (W)$ for some  $W \in \mathfrak{B}$ with $0 < \mu (W) < +\infty$, the operator $T_f$ is called {\em a dissipative composition operator}.  
\end{definition}

We note that the next theorem is not a corollary of the previous one as we obtain $\bf B$ as a specific $L^p$ space.

\begin{theorem}\label{thm:shift-like2shift}
A dissipative composition operator $T_f\colon L^p(\mu) \to L^p(\mu)$, associated with a dissipative system $(M,{\mathfrak{B}},\mu, f)$ generated by a set $W$, is conjugate to a shift operator $\sigma_{\mathcal S}\colon  {\bf B} \to {\bf B}$, where 
$$
{\bf B} \,=\, \Big\{ (\psi_n)_{n\,\in \,\mathbb Z} \in {L^p(\mu\mid_W)}^{\mathbb Z} \colon \sum_{n\,\in\, \mathbb Z}\, \int_W |\psi_n|^p \,\frac{d\mu f^{n}}{d\mu} \, d\mu < +\infty \Big\}
$$ 
endowed with the norm 
$$
\|(\psi_n)_{n\,\in \,\mathbb Z}\|_{\bf B} \,=\, \left(\sum_{n\,\in\, \mathbb Z} \, \int_W |\psi_n|^p \,  \frac{d\mu f^{n}}{d\mu}\, d\mu\right)^{\frac{1}{p}}
$$
and $\mathcal S=(S_n)_{n\,\in\, \mathbb Z}$, with $S_n$ being the identity map on $L^p(\mu\mid_W)$ for every $n \in \Z$. 
\end{theorem}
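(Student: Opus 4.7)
The plan is to construct an explicit linear isometric isomorphism $\Phi\colon L^p(\mu)\to {\bf B}$ that intertwines $T_f$ and $\sigma_{\mathcal S}$. Motivated by the orbit decomposition $M = \dot\bigcup_{n\,\in\,\mathbb Z} f^n(W)$, the natural candidate is
\[
\Phi(\varphi) \,:=\, \big((\varphi\circ f^n)\mid_W\big)_{n\,\in\,\mathbb Z}, \qquad \varphi \in L^p(\mu).
\]
With this choice the conjugacy relation is immediate: writing $\psi_n := (\varphi\circ f^n)\mid_W$, we get
\[
\Phi(T_f\varphi)_n \,=\, (\varphi\circ f^{n+1})\mid_W \,=\, \psi_{n+1} \,=\, \big(\sigma_{\mathcal S}\Phi(\varphi)\big)_n,
\]
because every $S_{n+1}$ is the identity. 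Hence the whole argument reduces to showing that $\Phi$ is a well-defined bijective isometry onto $\bf B$.

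For the isometry, I would apply the pushforward change-of-variables formula, namely $\int h\,d(\mu\circ f^n) = \int (h\circ f^{-n})\,d\mu$. Since $\psi_n\circ f^{-n} = \varphi$ on $f^n(W)$,
\[
\int_W |\psi_n|^p\,\frac{d\mu f^n}{d\mu}\,d\mu \,=\, \int_W |\psi_n|^p\,d(\mu\circ f^n) \,=\, \int_{f^n(W)} |\varphi|^p\,d\mu.
\]
Summing over $n\in\mathbb Z$ and using the disjointness of the sets $f^n(W)$ yields the isometric identity $\|\Phi(\varphi)\|_{\bf B}^p = \|\varphi\|_{L^p(\mu)}^p$; in particular $\Phi(\varphi)\in{\bf B}$. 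Surjectivity follows by reversing the construction: given $(\psi_n)\in{\bf B}$, define $\varphi$ piecewise on $M$ by $\varphi\mid_{f^n(W)} := \psi_n\circ f^{-n}$; the same computation shows $\varphi\in L^p(\mu)$ with $\Phi(\varphi)=(\psi_n)$. This also proves, a posteriori, that $\bf B$ is a Banach space, being the image of $L^p(\mu)$ under a linear isometry.

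Finally, I would verify that $\mathcal S = (\mathrm{Id})_{n\in\mathbb Z}$ satisfies the hypotheses of the shift-operator framework and that $\sigma_{\mathcal S}\colon{\bf B}\to{\bf B}$ is bounded. The uniform norm bound on $\mathcal S$ is trivial, and the boundedness of $\sigma_{\mathcal S}$ is controlled by the Radon--Nikodym chain rule
\[
\frac{d\mu f^{n+1}}{d\mu} \,=\, \Big(\frac{d\mu f}{d\mu}\circ f^n\Big)\cdot \frac{d\mu f^n}{d\mu},
\]
which shows that ratios of consecutive weights $d\mu f^n/d\mu$ are controlled by the two-sided bounds on $d\mu f/d\mu$ assumed in Definition~\ref{def:shift-like}.

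The main obstacle is not conceptual but rather technical: keeping track of the correct direction of the changes of variables (pushforward versus pullback under $f^n$ or $f^{-n}$) and consistently applying the Radon--Nikodym identity so that the weight $d\mu f^n/d\mu$ in the definition of $\bf B$ matches exactly the mass that $\varphi$ carries on the slice $f^n(W)$. Once this bookkeeping is in place, the theorem is essentially the identification of $L^p(\mu)$ with its slicewise representation along the orbit of the fundamental domain $W$.
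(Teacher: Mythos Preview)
Your proposal is correct and follows essentially the same route as the paper: the conjugacy $\Phi(\varphi)=((\varphi\circ f^n)\mid_W)_{n\in\mathbb Z}$ is exactly the map $\Gamma$ the authors use, and your isometry computation and surjectivity argument via $\varphi\mid_{f^n(W)}=\psi_n\circ f^{-n}$ mirror theirs line by line. Your explicit check that $\sigma_{\mathcal S}$ is bounded on ${\bf B}$ via the Radon--Nikodym chain rule is a small addition the paper leaves implicit (they get it for free from the conjugacy, since $T_f$ is bounded).
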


It is worth mentioning that, even though all the operators $S_n$ are the identity map, the shift operator $\sigma_{\mathcal S}$ can exhibit a variety of dynamical behaviors. For instance, depending on the dissipative system $(M,{\mathfrak{B}},\mu, f)$, the shift operator $\sigma_{\mathcal S}$ may or may not have the shadowing property, be transitive, be expansive, etc (cf. \cite{DarioEmmaMartinaJMAA}).

\subsection{Factors of shift operators}

In what follows we will find weighted backward shifts defined on $\ell_p(\K)$, for $\ \leq p < +\infty$, as factors of shift operators. To this end, we introduce some notation which expedite the  statements of the main results. 

Consider a Banach space $X$ and a sequence $(S_n)_{n \, \in \, \Z}$ of bounded invertible linear operators on $X$. For $n, m \in \Z$, let
\begin{equation}\label{eq:Snm}
 S_{[n, \,m]} \,=\,
\begin{cases}
\begin{array}{cc}
    S_n  \,\circ \,\cdots\, \circ \, S_m & \quad \text{ if $n  < m$}\\
    S_n  & \quad \text{ if $n  = m$}\\
    Id & \quad \text{ if $n  > m$} 
    \end{array}.
\end{cases}
\end{equation}
Given $x\in X\setminus\{0\}$, define the sequence of weights $\omega(x) = (\omega_n(x))_{n\,\in\, \mathbb Z}\in \mathbb R^{\mathbb Z}$ by 

\begin{equation}\label{eq:weights}
\omega_n(x) \,=\,
\begin{cases}
\begin{array}{cc}
\frac{\|(\,S_{[1,n-1]}\,)^{-1}(x)\|}{\|(\,S_{[1,n]}\,)^{-1}(x)\|}  & \quad \text{if} \,\, n > 0 \medskip \\
\frac{\|S_{[n,0]}(x)\|}{\|S_{[n+1,0]}(x)\|}  & \quad  \text{if} \,\, n \leq 0.
\end{array}
\end{cases}
\end{equation}
Since are assuming that there exists $C > 0$ such that 
\begin{equation*}
\max_{n \, \in \, \Z} \, \big\{\|S_n\|, \, \|S_n^{-1}\|\big\} \, < \,C
\end{equation*}
we are sure that the shift operator $\sigma_{\mathcal S}$ is a well-defined bounded invertible operator acting on $\ell_p(X)$, $1\le p <+\infty$, and that the sequence $\big(\omega_n(x)\big)_{n\,\in\, \Z}$ defined by \eqref{eq:weights} is bounded from below by $C$.

\smallskip

We shall now address a special class of sequences $\mathcal S=(S_n)_{n\,\in\, \mathbb Z}$.

\begin{definition}[Orthogonal basis for sequences of operators]\label{def:rig}
Assume that $X$ is a Hilbert space. A sequence $\mathcal S=(S_n)_{n\,\in\, \mathbb Z}$ of bounded invertible linear operators on $X$ \emph{has an orthogonal basis $\cE$} if $\{e_n(x)\colon x \in \cE\}$ is orthogonal for every $n \in \Z$, where 
\begin{equation}\label{eq:vectors}
e_n(x) \,=\,
\begin{cases}
\begin{array}{cc}
\frac{(\,S_{[1,n]}\,)^{-1}\,(x)}{\|(\,S_{[1,n]}\,)^{-1}\,(x)\|}  & \quad \text{if} \,\, n \geq 0 \medskip \\
\frac{S_{[n+1,0]}\,(x)}{\|S_{[n+1,0]}\,(x)\|}  & \quad  \text{if} \,\, n <0
\end{array}.
\end{cases}
\end{equation}
\end{definition}

\medskip

We note that, by construction, if $S_n=L$ for all $n \in \Z$, then $(e_n(x))_{n\, \in\, \Z}$ is the normalized orbit of $x \in \cE$ by $L$. Moreover, in general, one always has 
\begin{equation}\label{eq:EW}
S_{n+1} \,\big(e_{n+1}(x)\big)\,=\, \omega_{n+1}(x)\,e_n(x)  \quad \quad \forall\, n \in Z \quad \quad \forall \, x \in \cE.
\end{equation}

\smallskip

We observe that, if $S_n$ is orthogonal for every $n \in \Z$, then any orthogonal basis $\cE$ of $X$ is an orthogonal basis for the sequence $\mathcal S=(S_n)_{n\,\in\, \mathbb Z}$. In this case, it is known that each $S_n$ is a scalar multiple of an isometry, say $S_n = \alpha_n I_n$, with $\alpha_n\in \R\setminus \{0\}$ and $I_n$ being an isometry. Thus, $\omega_n(x)=|\alpha_n|$ for every $n\in \Z$ and $x \in X\setminus 
\{0\}$. 

\begin{theorem}[Factors of shifts]\label{thm:factors}
Let $X$ be a Hilbert space and suppose that the sequence of operators $\mathcal S=(S_n)_{n\,\in\, \mathbb Z}$ on $X$ has an orthogonal basis $\cE$. Then, for any $x \in \cE$, the weighted backward shift $B_{\omega(x)}\colon \ell_p( \mathbb K) \to \ell_p(\mathbb K)$ is a factor of the shift operator $\sigma_{\mathcal S}\colon \ell_p( X) \to \ell_p( X)$.
\end{theorem}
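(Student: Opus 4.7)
\medskip

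\noindent\textbf{Proof plan.} My plan is to write down the factor map explicitly as the coordinate-wise inner product against the unit vectors $e_n(x)$, and verify the three requirements (continuity, surjectivity, intertwining) directly from \eqref{eq:EW} and the orthogonality assumption. Specifically, fix $x \in \cE$ and define $\pi \colon \ell_p(X) \to \ell_p(\mathbb K)$ by
\begin{equation*}
\pi\big((y_n)_{n\,\in\,\Z}\big) \,=\, \big(\langle y_n,\, e_n(x)\rangle\big)_{n\,\in\,\Z}.
\end{equation*}
Since $\|e_n(x)\|=1$ for every $n$, the Cauchy--Schwarz inequality gives $|\langle y_n,e_n(x)\rangle|\le \|y_n\|$, so $\pi$ is well-defined and continuous with $\|\pi\|\le 1$. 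Surjectivity is equally easy: given $(a_n)\in \ell_p(\K)$, the sequence $(a_n\,e_n(x))_{n\,\in\,\Z}$ lies in $\ell_p(X)$ with the same norm, and $\pi$ sends it back to $(a_n)$.

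\medskip

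The heart of the argument is the intertwining identity $\pi\circ \sigma_{\mathcal S}=B_{\omega(x)}\circ \pi$, which amounts to showing, for every $n\in \Z$ and every $y_{n+1}\in X$,
\begin{equation*}
\langle S_{n+1}(y_{n+1}),\, e_n(x)\rangle \,=\, \omega_{n+1}(x)\,\langle y_{n+1},\, e_{n+1}(x)\rangle.
\end{equation*}
To obtain this I would expand $y_{n+1}$ in the orthonormal family $\{e_{n+1}(x')\colon x'\in \cE\}$, using that this family is a Hilbert basis of $X$, so that $y_{n+1}=\sum_{x'\,\in\,\cE}\langle y_{n+1},e_{n+1}(x')\rangle\,e_{n+1}(x')$ with convergence in $X$. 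Applying $S_{n+1}$ coordinate by coordinate, by continuity and invoking \eqref{eq:EW}, one has
\begin{equation*}
S_{n+1}(y_{n+1}) \,=\, \sum_{x'\,\in\,\cE}\,\omega_{n+1}(x')\,\langle y_{n+1},\,e_{n+1}(x')\rangle\,e_n(x').
\end{equation*}
Taking the inner product with $e_n(x)$ and using the orthonormality of $\{e_n(x')\colon x'\in \cE\}$ isolates the single surviving term $\omega_{n+1}(x)\,\langle y_{n+1},e_{n+1}(x)\rangle$, which is exactly what we need.

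\medskip

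The step I expect to be the main obstacle is the exchange of $S_{n+1}$ with the infinite sum, i.e. the convergence of the expansion of $y_{n+1}$ in the Hilbert basis $\{e_{n+1}(x')\colon x'\in\cE\}$; this is where the full strength of the orthogonal basis hypothesis enters, since without density of the span one could not control the component of $y_{n+1}$ in the orthogonal complement (note that $S_{n+1}$ need not preserve orthogonality). Once this expansion is justified, the intertwining is a one-line computation, and combined with the continuity and surjectivity already established, it exhibits $B_{\omega(x)}$ as a factor of $\sigma_{\mathcal S}$, completing the proof.
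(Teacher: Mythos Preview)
Your proof is correct and essentially the same as the paper's: the paper defines the factor map by $\Gamma\big((x_n)_n\big)=\big(\Pi_{e_n(x)}(x_n)\big)_n$ using the orthogonal projection onto the line spanned by $e_n(x)$, which in a Hilbert space is precisely your $\langle \cdot, e_n(x)\rangle$ up to identifying the one-dimensional subspace with $\K$. The continuity, surjectivity, and intertwining arguments are identical, including the expansion of $x_{n+1}$ in the orthonormal family $\{e_{n+1}(b):b\in\cE\}$ and the use of \eqref{eq:EW} to push $S_{n+1}$ through. Your flagged ``obstacle'' is not really one: $S_{n+1}$ is bounded, so it passes through the convergent orthonormal expansion automatically, and density of the span follows from $\cE$ being a basis together with the invertibility of the operators $S_{[1,n]}$ and $S_{[n+1,0]}$.
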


The following consequence of Theorem~\ref{thm:factors} is due to the fact that, in general, factors preserve transitivity, mixing 
and the shadowing property. 

\begin{corollary}\label{cor:factordynamics}
Suppose we are in the setting of Theorem~\ref{thm:factors}. If $\sigma_{\mathcal S}$ is transitive, mixing or has the shadowing property, then the same holds for $B_{\omega(x)}$ for all $x \in \cE $.
\end{corollary}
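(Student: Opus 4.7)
The plan is to exploit Theorem~\ref{thm:factors}, which supplies a continuous linear surjection $\pi\colon \ell_p(X) \to \ell_p(\mathbb K)$ satisfying $\pi \circ \sigma_{\mathcal S} = B_{\omega(x)} \circ \pi$. Since $\pi$ is a bounded linear surjection between Banach spaces, the Open Mapping Theorem furnishes a constant $K > 0$ such that every $y \in \ell_p(\mathbb K)$ admits a preimage $v \in \pi^{-1}(y)$ with $\|v\| \le K\|y\|$. This quantitative lifting is what powers the shadowing case; transitivity and mixing will pass through by softer arguments.

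For transitivity I would observe that, if some $v \in \ell_p(X)$ has dense $\sigma_{\mathcal S}$-orbit, then $\pi(v)$ has dense $B_{\omega(x)}$-orbit, by continuity and surjectivity of $\pi$ combined with the intertwining identity. For mixing, I would take nonempty open $U', V' \subset \ell_p(\mathbb K)$ and consider the nonempty open preimages $U = \pi^{-1}(U')$ and $V = \pi^{-1}(V')$; applying mixing of $\sigma_{\mathcal S}$ to $U, V$ and pushing any witness forward via $\pi$ produces elements of $B_{\omega(x)}^n(U') \cap V'$ for all sufficiently large $n$.

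For shadowing, given $\varepsilon > 0$, I would first select a shadowing tolerance $\delta_0$ of $\sigma_{\mathcal S}$ corresponding to tracing accuracy $\varepsilon / \|\pi\|$, and then set $\delta = \delta_0 / K$. Given a $\delta$-pseudo-orbit $(y_n)_{n \,\in\, \Z}$ of $B_{\omega(x)}$ with errors $e_n = y_{n+1} - B_{\omega(x)}(y_n)$, the Open Mapping Theorem lets me pick lifts $w_n \in \pi^{-1}(e_n)$ satisfying $\|w_n\| \le K\delta = \delta_0$. Starting from any $v_0 \in \pi^{-1}(y_0)$, I would recursively set $v_{n+1} = \sigma_{\mathcal S}(v_n) + w_n$ for $n \ge 0$ and $v_{n-1} = \sigma_{\mathcal S}^{-1}(v_n - w_{n-1})$ for $n \le 0$. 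An induction in both directions shows $\pi(v_n) = y_n$ and $\|\sigma_{\mathcal S}(v_n) - v_{n+1}\| = \|w_n\| \le \delta_0$, so $(v_n)$ is a $\delta_0$-pseudo-orbit of $\sigma_{\mathcal S}$. Shadowing then yields $v \in \ell_p(X)$ with $\|\sigma_{\mathcal S}^n(v) - v_n\| < \varepsilon / \|\pi\|$ for every $n$, and applying $\pi$ gives $\|B_{\omega(x)}^n(\pi(v)) - y_n\| < \varepsilon$, as required.

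I expect the main obstacle to be the shadowing step: without the uniform lifting constant $K$ granted by the Open Mapping Theorem, the errors in the lifted pseudo-orbit could be unbounded and the argument would collapse. Transitivity and mixing, by contrast, pass to factors for essentially formal reasons.
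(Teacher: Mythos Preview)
Your proposal is correct and follows the same route as the paper, which simply invokes the well-known fact that transitivity, mixing and the shadowing property pass to factors; you have merely supplied the details of that fact, with the Open Mapping Theorem playing exactly the role it must in the shadowing step. One small remark: your transitivity argument via a hypercyclic vector tacitly assumes $\ell_p(X)$ is separable, so it is cleaner to reuse your mixing argument (preimages of open sets) for transitivity as well.
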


We note that, according to  Corollary~\ref{cor:factordynamics}, while proving that $\sigma_{\mathcal S}$ does not have a property in question (transitive, mixing or shadowing), we may simply find $x \in \cE$ such that $B_{\omega(x)}$ does not have the corresponding property (see Example~\ref{ex:no-cones}).

\subsection{Classification theorems}
We next describe the shift operators $\sigma_{\mathcal S}$ when $X$ is a finite dimensional Hilbert space and ${\mathcal S} = (S_n)_{n \, \in \, \Z}$ has an orthogonal basis. Given a product of a finite number of copies of the Banach space $\ell_p(\mathbb K)$, endow the product vector space with the sum norm.

\begin{theorem}[Classification theorem I]\label{thm:productfactor}
Let $X$ be a finite dimensional Hilbert space and assume that $\mathcal S=(S_n)_{n\,\in \,\Z}$ has an orthogonal basis $\cE$. Then, 
 for each $1\le p <+\infty$,  
 $\sigma_{\mathcal S}\colon \ell_p( X) \to \ell_p( X)$ is conjugate to the finite product of weighted backward shifts 
 $$\prod _{x \,\in\, \cE} \,B_{\omega(x)} \colon \quad \big(\ell_p(\mathbb K)\big)^{\dim X} \quad \to \quad \big(\ell_p(\mathbb K)\big)^{\dim X}$$
 where $\dim X$ stands for the dimension of $X$.
\end{theorem}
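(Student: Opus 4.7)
The plan is to construct an explicit conjugacy
\[
\Phi\colon \ell_p(X) \to \big(\ell_p(\mathbb K)\big)^{\dim X}
\]
by coordinatizing each $y_n\in X$ against the $n$-dependent basis $\{e_n(x)\colon x\in\cE\}$ supplied by Definition~\ref{def:rig}. Concretely, using the unique expansion $y_n=\sum_{x\in\cE}c_n(x)\,e_n(x)$, I would set $\Phi\big((y_n)_n\big)=\big((c_n(x))_{n\in\Z}\big)_{x\in\cE}$. The intertwining relation \eqref{eq:EW}, namely $S_{n+1}(e_{n+1}(x))=\omega_{n+1}(x)\,e_n(x)$, is precisely the mechanism that converts the action of $\sigma_{\mathcal S}$ into the coordinatewise weighted backward shift $B_{\omega(x)}$ on each of the $\dim X$ factors.

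I would proceed in three steps. Linearity and bijectivity of $\Phi$ are immediate once one observes that $\{e_n(x)\}_{x\in\cE}$ is in fact \emph{orthonormal} for every $n$: orthogonality is the hypothesis, unit norm is built into \eqref{eq:vectors}, and the cardinality of $\cE$ equals $\dim X$. For bi-continuity, Pythagoras yields $\|y_n\|_X^2=\sum_{x\in\cE}|c_n(x)|^2$, and the equivalence of any two norms on the finite-dimensional space $\mathbb K^{\dim X}$ produces constants $0<c_p\le C_p<+\infty$, depending only on $p$ and $\dim X$, with
\[
c_p\sum_{x\in\cE}|c_n(x)|^p \,\le\, \|y_n\|_X^p \,\le\, C_p\sum_{x\in\cE}|c_n(x)|^p \qquad \forall\, n\in\Z.
\]
Summing over $n$ and interchanging the two nonnegative sums gives an equivalence between $\|\cdot\|_{\ell_p(X)}$ and the $\ell_p$-norm on $\big(\ell_p(\mathbb K)\big)^{\dim X}$; a further use of norm equivalence in $\mathbb R^{\dim X}$ then replaces it with the sum norm prescribed in the statement. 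Finally, the intertwining is a direct computation: by \eqref{eq:EW},
\[
\sigma_{\mathcal S}(y)_n\,=\,\sum_{x\in\cE}c_{n+1}(x)\,S_{n+1}(e_{n+1}(x))\,=\,\sum_{x\in\cE}\omega_{n+1}(x)\,c_{n+1}(x)\,e_n(x),
\]
so the $x$-component of $\Phi(\sigma_{\mathcal S}y)$ is $(\omega_{n+1}(x)\,c_{n+1}(x))_{n\in\Z}=B_{\omega(x)}(c(x))$, which is exactly what is required.

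The main obstacle I foresee is the second step: one must verify that the norm-equivalence constants can be chosen independently of $n\in\Z$, so that the interchange of the summations over $n$ and over $\cE$ is legitimate. Finite-dimensionality of $X$ is essential here — it is precisely what makes $c_p,C_p$ uniform in $n$ and what forces $|\cE|=\dim X$ to produce a genuinely finite product on the factor side. Once this uniformity is in hand, the rest of the argument is routine bookkeeping.
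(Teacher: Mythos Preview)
Your proposal is correct and is essentially the paper's own proof: the map $\Phi$ you define coincides with the paper's conjugacy $I\big((x_n)_n\big)=\big(\Gamma_{b_1}((x_n)_n),\dots,\Gamma_{b_d}((x_n)_n)\big)$, where $\Gamma_b((x_n)_n)=(\Pi_{e_n(b)}(x_n))_n$ picks out exactly your coefficients $c_n(b)$, and the intertwining step via \eqref{eq:EW} is identical. The only cosmetic difference is that the paper establishes continuity and surjectivity of $I$ separately and then invokes the Open Mapping Theorem, whereas you obtain bi-continuity directly from the uniform (in $n$) Pythagorean identity and finite-dimensional norm equivalence; both routes are equally valid and equally short.
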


The reader may wonder why the weights $\omega_n$ are real numbers despite the fact that the underlying field $\K$ may be $\C$. Actually, it is well-known that every weighted backward shift on $\C$ with complex weights is conjugate to a weighted backward shift with positive weights \cite[Section 3.2]{Gimenez-Peris2002}. This also follows from our Theorem~\ref{thm:productfactor}. Moreover, every weighted backward shift with positive weights over $\C$ can easily be seen as conjugate to the product of two weighted backward shifts with positive weights over $\R$.
\smallskip

The assumptions on the previous theorem may seem restrictive. However, below we state an alternative classification theorem for the shift operators $\sigma_{\mathcal S}$ when $X$ is a finite dimensional Banach space and orthogonality is replaced by uniform boundedness of projections. Let us introduce the necessary notions.

\smallskip

Given a Banach space $X$ and a vector $x\in X$, let $F_x\subset X$ denote the subspace spanned by $x$. Consider a basis $\cE$ of $X$, $x \in \cE$ and the projection $\Pi_{x}\colon X\to F_x$ given by
\begin{equation}\label{def:projBanach}
\Pi_{x} \left(\sum_{b\,\in\, \cE} \,\alpha_b \, b 
\right) = \alpha_x \,x   
\end{equation}
for any sequence of scalars $(\alpha_b)_{b\,\in\, \cE}$ in $\mathbb K$. For every $x\in \cE$, the map $\Pi_{x}$ is well defined, since every vector in $X$ has a unique representation in terms of vectors in $\cE$; furthermore, $\Pi_{x}$ is linear, bounded and $\Pi_{x} \circ \Pi_{x} = \Pi_{x}$.

\begin{definition}[$\mathcal S-$bounded projections]\label{def:bounded}
Given a sequence $\mathcal S = (S_n)_{n\,\in\, \Z}$ of bounded invertible linear operators on a Banach space $X$, we say that a basis $\cE$ of $X$ has \emph{$\mathcal S-$bounded projections} if there exists $C>0$ such that 
\begin{equation}\label{eq:boundednorms}
\sup_{n\,\in\, \Z} \,\sup_{b\,\in\, \cE_n}\,\, \|\Pi_{b}\| \,\le \,C \,< \,+\infty,    
\end{equation}
where $\cE_n = \{e_n(x)\colon x\in \cE\}$.
\end{definition}

\begin{theorem}[Classification theorem II]\label{thm:productfactorBanach}
Let $X$ be a finite dimensional Banach space, $\mathcal S=(S_n)_{n\,\in\, \mathbb Z}$ be a sequence of operators on $X$ and $\cE$ be a basis of $X$ with $\mathcal S-$bounded projections. Then, for each $1 \le p < +\infty$, the shift operator $\sigma_{\mathcal S}\colon \ell_p(X) \to \ell_p(X)$ is conjugate to the finite product $\prod _{x \,\in\, \cE}\, B_{\omega(x)} \colon (\ell_p(\K))^{\dim X} \to (\ell_p(\K))^{\dim X}$.
\end{theorem}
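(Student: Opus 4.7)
The plan is to adapt the proof of Theorem~\ref{thm:productfactor} by substituting the role played there by orthogonality with the uniform bound \eqref{eq:boundednorms} on the projections along the bases $\cE_n=\{e_n(b)\colon b\in \cE\}$. The first step I would take is to observe that the commutation relation \eqref{eq:EW}, namely $S_{n+1}(e_{n+1}(b)) = \omega_{n+1}(b)\,e_n(b)$, is a purely algebraic consequence of the definitions \eqref{eq:weights} and \eqref{eq:vectors}: only norms appear in them, so no inner-product structure is required and the identity remains valid in the present Banach setting. Finite dimensionality of $X$ together with invertibility of each $S_n$ also guarantees that each $\cE_n$ is a basis of $X$, so every $x_n\in X$ admits a unique expansion $x_n = \sum_{b\,\in\,\cE}\alpha_n^b\,e_n(b)$ with scalars $\alpha_n^b \in \mathbb K$.

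Next I would define the candidate conjugacy
\[
\Phi\colon \ell_p(X) \,\to\, \big(\ell_p(\mathbb K)\big)^{\dim X}, \qquad (x_n)_{n\,\in\,\Z} \,\mapsto\, \big((\alpha_n^b)_{n\,\in\,\Z}\big)_{b\,\in\,\cE},
\]
which is clearly linear and bijective. Boundedness of $\Phi$ is where hypothesis \eqref{eq:boundednorms} enters: since $\Pi_{e_n(b)}(x_n) = \alpha_n^b\,e_n(b)$ and $\|e_n(b)\|=1$, one gets $|\alpha_n^b|\le C\,\|x_n\|$, hence $\|(\alpha_n^b)_n\|_{\ell_p(\mathbb K)} \le C\,\|(x_n)_n\|_{\ell_p(X)}$, and summing over the $\dim X$ elements of $\cE$ yields a bound in the sum norm on the product. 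Boundedness of $\Phi^{-1}$ is even simpler: the triangle inequality $\|x_n\| \le \sum_b |\alpha_n^b|$ combined with the finite-dimensional power-mean estimate $(\sum_b a_b)^p \le (\dim X)^{p-1}\sum_b a_b^p$ gives $\|(x_n)_n\|_{\ell_p(X)} \le (\dim X)^{(p-1)/p}\,\big(\sum_b \|(\alpha_n^b)_n\|_{\ell_p(\mathbb K)}^p\big)^{1/p}$, which in turn is controlled by the sum norm on the product.

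The intertwining identity $\Phi\circ\sigma_{\mathcal S} = \big(\prod_{b\,\in\,\cE} B_{\omega(b)}\big)\circ\Phi$ then follows immediately from \eqref{eq:EW}: applying $S_{n+1}$ term by term to the expansion of $x_{n+1}$ yields
\[
S_{n+1}(x_{n+1}) \,=\, \sum_{b\,\in\,\cE}\,\omega_{n+1}(b)\,\alpha_{n+1}^b\,e_n(b),
\]
so the $b$-component of $\Phi(\sigma_{\mathcal S}((x_n)))$ is the sequence $n\mapsto \omega_{n+1}(b)\,\alpha_{n+1}^b$, which is exactly $B_{\omega(b)}$ applied to the $b$-component $(\alpha_n^b)_n$ of $\Phi((x_n))$. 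Combined with the two-sided boundedness of $\Phi$, this gives the desired conjugacy.

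I expect the main obstacle to be nothing conceptual but rather careful bookkeeping of the norm constants: one has to verify that the inequalities produced by \eqref{eq:boundednorms} on one side and by the triangle inequality on the other fit together uniformly in $n \in \Z$ and over the finite basis $\cE$, so that both $\Phi$ and $\Phi^{-1}$ come out bounded. Once those inequalities are in place, the algebraic identity \eqref{eq:EW} does all the dynamical work and no inner-product structure is needed, precisely paralleling the proof of Theorem~\ref{thm:productfactor}.
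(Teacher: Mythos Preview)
Your proposal is correct and follows essentially the same approach as the paper: the map $\Phi$ you define is precisely the paper's conjugacy $I$, its boundedness is obtained from the $\mathcal S$-bounded projections hypothesis, the bound on $\Phi^{-1}$ via the triangle inequality and the power-mean estimate is the content of the paper's auxiliary Lemma~\ref{le:controlnormsBanach}, and the intertwining is established coordinatewise from \eqref{eq:EW} exactly as you describe. The only cosmetic difference is that the paper extracts the reverse bound as a separate lemma while you do it inline.
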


Theorem~\ref{thm:productfactorBanach} allows us to strengthen Theorem~\ref{thm:productfactor} by replacing orthogonality with suitable uniform lower and upper bounds on the angles between distinct vectors in each basis $\cE_n$.

\begin{corollary}\label{cor:separatedbasis-2}
Let $X$ be a finite dimensional Hilbert space with dimension $\dim X \geq 2$ and assume that $\mathcal S=(S_n)_{n\,\in \,\Z}$ has a basis $\cE$ of $X$ for which there exists $0 < \gamma < 1/(d-1)$ such that, for every $n \in \Z$, the angle $\measuredangle(u,v)$ between distinct vectors $u,v \in \cE_n$ satisfies the condition
$$ \cos \measuredangle(u,v) \,\in\, [-\gamma, \gamma].$$ 
Then, given $1 \le p < +\infty$, the shift $\sigma_{\mathcal S}\colon \ell_p(X) \to \ell_p(X)$ is conjugate to the finite product of weighted backward shifts $\prod _{x \,\in\, \cE} B_{\omega(x)} \colon \big(\ell_p(\mathbb K)\big)^{\dim X} \to \big(\ell_p(\mathbb K)\big)^{\dim X}$. 
\end{corollary}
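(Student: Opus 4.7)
The plan is to reduce the corollary to Theorem~\ref{thm:productfactorBanach} by showing that the angle hypothesis forces the basis $\cE$ to have $\mathcal S$-bounded projections. Since the vectors $e_n(x)$ defined by \eqref{eq:vectors} are unit vectors by construction, each $\cE_n$ consists of $d=\dim X$ unit vectors whose pairwise inner products lie in $[-\gamma,\gamma]$.

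First I would fix $n\in\Z$, enumerate $\cE_n=\{v_1,\dots,v_d\}$, and form the Gram matrix $G=(\langle v_j,v_i\rangle)_{1\le i,j\le d}$. This is a Hermitian matrix with $1$'s on the diagonal and off-diagonal entries of modulus at most $\gamma$. Gershgorin's disc theorem then confines every eigenvalue of $G$ to $\{z\in\C:|z-1|\le (d-1)\gamma\}$, so the hypothesis $\gamma<1/(d-1)$ yields the uniform lower bound $\lambda_{\min}(G)\ge 1-(d-1)\gamma>0$.

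Writing an arbitrary $x\in X$ as $x=\sum_{j=1}^d \alpha_j v_j$, one computes $\|x\|^2=\alpha^* G\alpha \ge (1-(d-1)\gamma)\|\alpha\|^2 \ge (1-(d-1)\gamma)\,|\alpha_i|^2$ for every $i$. Since $\Pi_{v_i}(x)=\alpha_i v_i$ and $\|v_i\|=1$, this gives
$$\|\Pi_{v_i}\|\,\le\,\frac{1}{\sqrt{1-(d-1)\gamma}}.$$
The right-hand side depends only on $d$ and $\gamma$, so the bound is uniform in $n\in\Z$ and in the choice of $v_i\in\cE_n$. This verifies condition \eqref{eq:boundednorms}, and Theorem~\ref{thm:productfactorBanach} then produces the desired conjugacy.

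The main conceptual step is the translation of the geometric angle condition into a uniform spectral lower bound for the Gram matrices $G_n$; Gershgorin is the natural tool because it converts the row-wise bound $(d-1)\gamma$ on off-diagonal mass into exactly the threshold $\gamma<1/(d-1)$ appearing in the hypothesis. Beyond that, everything reduces to a direct application of the preceding classification theorem.
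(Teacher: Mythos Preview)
Your proof is correct and follows essentially the same route as the paper: both reduce the corollary to Theorem~\ref{thm:productfactorBanach} by establishing the uniform lower bound $\|x\|^2 \ge (1-(d-1)\gamma)\sum_i|\alpha_i|^2$, yielding the identical projection bound $\|\Pi_{e_n(b)}\|\le (1-(d-1)\gamma)^{-1/2}$. The only cosmetic difference is that you package the estimate via Gershgorin on the Gram matrix, whereas the paper expands $\|x\|^2$ directly and applies $2|\alpha_i||\alpha_j|\le |\alpha_i|^2+|\alpha_j|^2$; the two arguments are equivalent.
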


We observe that, as $d$ goes to infinity, the requirement in the previous corollary gets closer to orthogonality.

\smallskip

The next corollary provides another way of improving Theorem~\ref{thm:productfactor}, by using Theorem~\ref{thm:productfactorBanach} and a new sufficient condition (which does not depend on the dimension $d$ of the Hilbert space $X$) for a sequence $\mathcal S = (S_n)_{n\,\in\, \Z}$ to have $\mathcal S-$bounded projections. Given a Hilbert space $X$, $v \in X\setminus
\{0\}$ and a subspace $F\subset X$, consider the following infimum 
$$\measuredangle (v,F) \, = \,\inf_{u\,\in\, F\setminus
\{0\}} \; \measuredangle (v,u).$$
For a basis $\cE$ of $X$ and $v \in \cE$, we denote by $F_{n,v}$ the subspace of $X$ generated by the vectors $\big\{e_n(b) \colon b \in \cE \setminus\{v\}\big\}$.

\begin{corollary}\label{cor:angles}
Let $X$ be a finite dimensional Hilbert space with dimension $d \geq 2$ and assume that $\mathcal S = (S_n)_{n\,\in\, \Z}$ has a basis $\cE$ of $X$ satisfying
\begin{equation*}
\inf_{n\,\in\, \mathbb Z, \,\, v \, \in \, \cE} \,\, \measuredangle (e_n(v),F_{n,v}) \,\,> \,\,0.
\end{equation*}
Then, for every $1 \le p < +\infty$, the shift $\sigma_{\mathcal S}\colon \ell_p( X) \to \ell_p( X)$ is conjugate to the finite product of weighted backward shifts $\prod _{x \,\in\, \cE} B_{\omega(x)} \colon \big(\ell_p(\mathbb K)\big)^{\dim X} \to \big(\ell_p(\mathbb K)\big)^{\dim X}$. 
\end{corollary}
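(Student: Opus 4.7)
The plan is to derive Corollary~\ref{cor:angles} as an immediate consequence of Theorem~\ref{thm:productfactorBanach}: it suffices to verify that the angle hypothesis forces the basis $\cE$ to have $\mathcal S$-bounded projections in the sense of Definition~\ref{def:bounded}. Fix $n \in \Z$. By \eqref{eq:vectors}, every $e_n(b)$ is obtained from $b$ by applying the invertible operator $(S_{[1,n]})^{-1}$ (if $n \geq 0$) or $S_{[n+1,0]}$ (if $n < 0$) and then normalizing; hence the family $\cE_n = \{e_n(b) : b \in \cE\}$ is itself a basis of $X$ consisting of unit vectors, so each projection $\Pi_{e_n(v)}$ in \eqref{def:projBanach} is well defined.

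Next I would convert the angular condition into a norm bound on each coordinate projection. For $v \in \cE$, set
$$\theta_{n,v} \,:=\, \measuredangle\big(e_n(v), F_{n,v}\big) \,>\, 0,$$
where $F_{n,v}$ is the subspace generated by $\cE_n \setminus \{e_n(v)\}$; positivity is automatic since $\cE_n$ is a basis. Any $w \in X$ decomposes uniquely as $w = \alpha\, e_n(v) + w'$ with $w' \in F_{n,v}$, so $\Pi_{e_n(v)}(w) = \alpha\, e_n(v)$. Because $\|e_n(v)\| = 1$, one has
$$\|w\| \,\ge\, \operatorname{dist}\big(\alpha\, e_n(v),\, F_{n,v}\big) \,=\, |\alpha|\,\sin \theta_{n,v},$$
which gives $\|\Pi_{e_n(v)}(w)\| = |\alpha| \le \|w\|/\sin \theta_{n,v}$, and hence $\|\Pi_{e_n(v)}\| \le 1/\sin \theta_{n,v}$.

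Taking $\theta_0 := \inf_{n \in \Z,\, v \in \cE} \theta_{n,v}$, the hypothesis yields $\theta_0 > 0$, whence
$$\sup_{n\,\in\, \Z}\,\sup_{b\,\in\, \cE_n}\, \|\Pi_{b}\| \,\le\, \frac{1}{\sin \theta_0} \,<\, +\infty.$$
This is exactly the $\mathcal S$-bounded projections condition, so Theorem~\ref{thm:productfactorBanach} applies and delivers the stated conjugacy between $\sigma_{\mathcal S}$ on $\ell_p(X)$ and $\prod_{x \in \cE} B_{\omega(x)}$ on $(\ell_p(\K))^{\dim X}$ for each $1 \le p < +\infty$.

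There is essentially no hard step here; the argument is engineered so that a classical angle-versus-projection-norm estimate closes the loop. The only minor points worth double-checking are that $\cE_n$ is indeed a basis of $X$ (so the decomposition $w = \alpha\, e_n(v) + w'$ exists and is unique) and that the uniform angle lower bound survives the passage to the supremum over $n$ and $v$; both are immediate from the hypothesis and from the invertibility of the operators $S_{[1,n]}$ and $S_{[n+1,0]}$.
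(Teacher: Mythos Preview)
Your proof is correct and follows the same strategy as the paper's: establish that the uniform angle lower bound yields $\mathcal S$-bounded projections, then invoke Theorem~\ref{thm:productfactorBanach}. Your bound $\|\Pi_{e_n(v)}\|\le 1/\sin\theta_{n,v}$ via the distance-to-subspace identity is in fact the same as the paper's (which, after an explicit quadratic minimization, obtains $1/\sqrt{2\beta-\beta^2}$ with $\beta=1-|\cos\theta|$, i.e.\ $1/|\sin\theta|$), just reached more directly.
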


\subsection{Equi-properties}

It is known that some relevant properties of linear invertible bounded operators may be conveyed to factors. We will strengthen such results in the setting of Theorem~\ref{thm:productfactor}.

\begin{definition}[Equi-properties]\label{def:equiP}
    Let $X$ be a Banach space and $\mathcal C$ be a class of operators acting on $X$. We say that $\mathcal C$ is:
\begin{enumerate}
    \item[(a)] \emph{Equi-transitive} if for any non-empty open sets $U,V\subset X$ there exists $n \in \N$ so that $T^n(U)\cap V \neq\emptyset$ for every $T\in \mathcal C$.
    \item[(b)] \emph{Equi-mixing} if for any non-empty open sets $U,V\subset X$ there exists $N \in \N$ so that $T^n(U)\cap V \neq\emptyset$ for every $T\in \mathcal C$ and every $n\ge N$.
    \item[(c)] \emph{Equi-shadowing} if there exists $K >0$ such that, for all $T\in \mathcal C$ and any bounded sequence $\{z_n\}_{n \,\in\, \Z}$ in $X$, there is a bounded sequence $\{x_n\}_{n \,\in \,\Z}$ in $X$ satisfying 
    \[ x_{n+1}-T(x_n) = z_n \quad \quad \text{and} \quad \quad  \sup_{n\, \in\, \Z} \,\|x_n\| < K \sup_{n \,\in\, \Z} \,\|z_n\|.\]
  \end{enumerate}
\end{definition}

Observe that, in case $\mathcal C$ consists of a single operator $T$, equi-property $\mathcal P$ is the same as property $\mathcal P$ for $T$. Also notice that, if $\mathcal C$ is finite, then equi-shadowing is equivalent to shadowing for all elements of $\mathcal C$.

\smallskip

In what follows $\mathcal P$ will stand for any of the properties summoned in Definition~\ref{def:equiP}, that is, transitivity, mixing or shadowing.

\begin{corollary}\label{cor:equiP}
Let $X$ be a finite dimensional Banach space, $\mathcal S=(S_n)_{n\,\in\, \mathbb Z}$ be a sequence of operators on $X$ and $\cE$ be a basis of $X$ with $\mathcal S-$bounded projections. If 
$\sigma_{\mathcal S}\colon \ell_p( X) \to \ell_p( X)$ has property $\mathcal P$, then $\mathcal C=\{B_{\omega(x)}\colon x\in \cE\}$ has equi-property $\mathcal P$.
\end{corollary}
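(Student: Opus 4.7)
The strategy is to reduce to the factorization provided by Theorem~\ref{thm:productfactorBanach} and exploit the fact that a property of a finite product of operators is a priori stronger than the corresponding equi-property for its coordinate factors.

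First I would apply Theorem~\ref{thm:productfactorBanach}: under the standing hypotheses, $\sigma_{\mathcal S}$ is linearly conjugate to the finite product $T := \prod_{x \in \cE} B_{\omega(x)}$ acting on $(\ell_p(\K))^{\dim X}$ equipped with the sum norm. Since transitivity, mixing and the shadowing property are preserved under topological conjugacy, $T$ inherits property $\mathcal P$ from $\sigma_{\mathcal S}$.

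For $\mathcal P$ equal to transitivity or mixing, pick arbitrary non-empty open sets $U, V \subset \ell_p(\K)$ and form $\widetilde U = \prod_{x \in \cE} U$ and $\widetilde V = \prod_{x \in \cE} V$, which are non-empty open subsets of $(\ell_p(\K))^{\dim X}$. If $T$ is transitive, there exists $n \in \N$ with $T^n(\widetilde U) \cap \widetilde V \neq \emptyset$; since $T$ acts coordinate-wise, this forces $B_{\omega(x)}^n(U) \cap V \neq \emptyset$ simultaneously for every $x \in \cE$. Hence $\mathcal C = \{B_{\omega(x)} : x \in \cE\}$ is equi-transitive. The argument for equi-mixing is identical, replacing the single instant $n$ by every $n \geq N$.

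For $\mathcal P$ the shadowing property, I would invoke the standard reformulation (valid for bounded invertible linear operators) that shadowing of $T$ is equivalent to the existence of $K > 0$ such that every bounded sequence $(\widetilde z_n)_{n\in\Z}$ in $(\ell_p(\K))^{\dim X}$ admits a bounded solution $(\widetilde x_n)$ of $\widetilde x_{n+1} - T(\widetilde x_n) = \widetilde z_n$ with $\sup_n \|\widetilde x_n\| < K \sup_n \|\widetilde z_n\|$; this reformulation is precisely the single-operator instance of the equi-shadowing definition. Given any $y \in \cE$ and any bounded sequence $(z_n)$ in $\ell_p(\K)$, let $\widetilde z_n$ be the vector in $(\ell_p(\K))^{\dim X}$ with $z_n$ in the $y$-th slot and zero elsewhere; the sum norm yields $\|\widetilde z_n\| = \|z_n\|$, and the $y$-th coordinate $x_n^y$ of the resulting $\widetilde x_n$ satisfies $x_{n+1}^y - B_{\omega(y)}(x_n^y) = z_n$ with $\|x_n^y\| \leq \|\widetilde x_n\|$. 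Thus the same $K$ works uniformly over $y \in \cE$, giving equi-shadowing of $\mathcal C$. The main technical point to watch is the transfer of the shadowing constant through the linear conjugacy from $\sigma_{\mathcal S}$ to $T$, since the constant gets multiplied by the operator norms of the conjugacy and its inverse; because these are finite, the existence of some uniform $K$ for the whole collection $\mathcal C$ is all that is needed.
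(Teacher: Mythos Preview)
Your proposal is correct and follows essentially the same route as the paper: apply the classification theorem to conjugate $\sigma_{\mathcal S}$ to the product $T=\prod_{x\in\cE} B_{\omega(x)}$, then extract the equi-property from $T$ by using the diagonal open sets $U\times\cdots\times U$, $V\times\cdots\times V$ for transitivity/mixing and by embedding a bounded sequence in a single slot (zeros elsewhere) for shadowing. The paper packages the second step as a separate lemma about products $T_1\times\cdots\times T_d$ on $Y^d$, but the content is identical to what you wrote; your remark about the shadowing constant changing under the conjugacy is a harmless refinement, since only the existence of \emph{some} uniform $K$ is required.
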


Applying Corollary~\ref{cor:equiP}, together with   Theorem~\ref{thm:productfactorBanach} and the fact that mixing, shadowing and generalized hyperbolicity are preserved under finite products and by linear conjugation, and letting ${\mathcal Q}$ be one of these properties, we conclude that:

\begin{corollary}\label{cor:charExpanShadowing}
Let $X$ be a finite dimensional Banach space, $\mathcal S=(S_n)_{n\,\in\, \mathbb Z}$ be a sequence of operators on $X$ and $\cE$ be a basis of $X$ with $\mathcal S-$bounded projections. Then the shift $\sigma_{\mathcal S}\colon \ell_p( X) \to \ell_p( X)$ has property ${\mathcal Q}$ if and only if $B_{\omega(x)}$ has property ${\mathcal Q}$ for all $x \in \cE$.
\end{corollary}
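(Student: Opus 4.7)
The plan is to combine Theorem~\ref{thm:productfactorBanach} with the preservation facts for $\mathcal Q$ invoked in the paragraph preceding the statement. Under the hypothesis of $\mathcal S$-bounded projections, Theorem~\ref{thm:productfactorBanach} furnishes a linear conjugacy
\[
\sigma_{\mathcal S}\colon \ell_p(X)\to \ell_p(X) \quad \cong \quad \prod_{x\,\in\,\cE} B_{\omega(x)}\colon \bigl(\ell_p(\K)\bigr)^{\dim X}\to \bigl(\ell_p(\K)\bigr)^{\dim X},
\]
where the product is over a finite set since $\dim X<+\infty$. Because each of mixing, shadowing, and generalized hyperbolicity is invariant under linear conjugation, the corollary reduces to showing that, for a finite family of bounded invertible linear operators, the finite product enjoys $\mathcal Q$ if and only if every factor does.

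For the direction that $\sigma_{\mathcal S}$ having $\mathcal Q$ implies each $B_{\omega(x)}$ has $\mathcal Q$, when $\mathcal Q$ is mixing or shadowing I would simply invoke Corollary~\ref{cor:equiP}: it provides equi-$\mathcal Q$ for the finite family $\{B_{\omega(x)}\colon x\in\cE\}$, which for a finite family reduces immediately to $\mathcal Q$ for each member. For generalized hyperbolicity I would argue directly by restricting a hyperbolic splitting of the product to each coordinate via the bounded linear projections that intertwine the product action with each factor $B_{\omega(x)}$.

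For the converse direction, I would verify product-preservation one property at a time. Mixing passes to a finite product because for open rectangles one takes the maximum of the coordinatewise mixing times. Shadowing passes because pseudo-orbits in a finite product decompose coordinatewise under the sum norm, the coordinate shadows exist in each factor, and they reassemble to a shadow of the original pseudo-orbit with a controlled constant. Generalized hyperbolicity passes by taking the direct sum of the stable and unstable sets of each factor and using the sum norm to preserve the uniform contraction/expansion estimates. The only mildly delicate step is the constant bookkeeping for shadowing, but this is a standard argument; overall, the corollary is a formal consequence of Theorem~\ref{thm:productfactorBanach}, Corollary~\ref{cor:equiP}, and the classical product-preservation results cited immediately before the statement.
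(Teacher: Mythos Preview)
Your proposal is correct and follows essentially the same route as the paper: reduce via Theorem~\ref{thm:productfactorBanach} and conjugacy invariance to the equivalence ``product has $\mathcal Q$ $\Leftrightarrow$ every factor has $\mathcal Q$'', then use Corollary~\ref{cor:equiP} for the forward direction (mixing, shadowing) and the cited product-preservation facts for the converse. Your only addition is to handle the forward direction for generalized hyperbolicity separately, since Corollary~\ref{cor:equiP} does not list it among its equi-properties; the paper absorbs this into the blanket assertion that generalized hyperbolicity is preserved under finite products and conjugation, leaving implicit that restriction to a factor also preserves it.
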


It is easy to verify that Corollary~\ref{cor:charExpanShadowing} fails for the shadowing property in case $X$ is infinite dimensional. For instance, let $X=\ell_2(\mathbb R)$, $\cE = \{e_j \colon j \in \Z\}$ be the canonical basis of $X$ and $T\colon \ell_2(\mathbb R) \to \ell_2(\mathbb R)$ be defined by 
 $$T(e_j) \,=\, \Big(1+\frac1{2^j}\Big) \,e_j$$
 for all $j \in \Z$. Then the shift operator $\sigma_{\mathcal S}\colon \ell_p(X) \to \ell_p(X)$,  determined by $\mathcal S=(S_n)_{n\,\in\, \mathbb Z}$ with $S_n = T$ for every $n \in \Z$, does not have the shadowing property (cf. \cite{BernardesMessaoudiETDS2020}), although for all $x \in \cE$ the weighted backward shift $B_{\omega(x)}\colon \ell_p(\R) \to \ell_p(\R)$ has it.

\subsection{Shadowing \emph{vs.} generalized hyperbolicity}


It is known that generalized hyperbolic linear operators satisfy the shadowing property 
(cf. \cite{CiriloGollobitPujals2020}). On the other hand, a  weighted backward shift on $\ell_p(\mathbb K)$ has the shadowing property if and only if it is generalized hyperbolic (cf. \cite{BernardesMessaoudiETDS2020}); and a finite product of generalized hyperbolic operators is clearly generalized hyperbolic. Since, under the hypothesis of Theorem~\ref{thm:productfactor}, 
$\sigma_{\mathcal S}$ is conjugate to a finite product of weighted backward shifts on $\ell_p(\mathbb K)$, we obtain the following equivalence:

\begin{corollary}\label{cor:GenHypShadowing} 
Let $X$ be a finite dimensional Banach space, $\mathcal S=(S_n)_{n\,\in\, \mathbb Z}$ be a sequence of operators on $X$ and $\cE$ be a basis of $X$ with $\mathcal S-$bounded projections. Then the shift $\sigma_{\mathcal S}\colon \ell_p(X) \to \ell_p(X)$ is generalized hyperbolic if and only if it has the shadowing property.
\end{corollary}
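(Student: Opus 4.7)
My plan is to prove each implication separately, exploiting previously established results in the paper and two external references. The forward direction, that generalized hyperbolicity implies the shadowing property, holds for any bounded invertible linear operator on a Banach space by the work of Cirilo--Gollobit--Pujals, so I would simply cite it and move on.

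For the converse, I would feed $\sigma_{\mathcal S}$ into the machinery already assembled. Since $\cE$ has $\mathcal S$-bounded projections and $X$ is finite dimensional, Theorem~\ref{thm:productfactorBanach} supplies a linear conjugacy between $\sigma_{\mathcal S}$ and the finite product $\prod_{x \in \cE} B_{\omega(x)}$ acting on $(\ell_p(\K))^{\dim X}$. Shadowing is preserved by linear conjugation, so the product inherits it, and then applying Corollary~\ref{cor:charExpanShadowing} with $\mathcal Q$ taken to be the shadowing property yields that every factor $B_{\omega(x)}$ has the shadowing property on $\ell_p(\K)$.

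At this point I would bring in the Bernardes--Messaoudi characterization: for a weighted backward shift on $\ell_p(\K)$, the shadowing property is equivalent to generalized hyperbolicity. Hence each $B_{\omega(x)}$ is generalized hyperbolic. Since generalized hyperbolicity is stable under finite products (one assembles the stable and unstable subspaces coordinatewise) and under linear conjugation, I would transport back along the conjugacy of Theorem~\ref{thm:productfactorBanach} to conclude that $\sigma_{\mathcal S}$ itself is generalized hyperbolic, closing the equivalence.

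I do not anticipate a genuine obstacle: the statement is essentially an assembly of Theorem~\ref{thm:productfactorBanach}, Corollary~\ref{cor:charExpanShadowing}, the Bernardes--Messaoudi equivalence, and the general implication generalized hyperbolic $\Rightarrow$ shadowing. The only point I would sanity-check is that the sum norm placed on $(\ell_p(\K))^{\dim X}$, declared just before Theorem~\ref{thm:productfactor}, does not interfere with the preservation of shadowing and generalized hyperbolicity under finite products; this is immediate because all norms on a finite direct product of Banach spaces are equivalent and both properties are invariant under passage to an equivalent norm.
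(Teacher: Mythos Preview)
Your proposal is correct and follows essentially the same argument the paper gives in the paragraph immediately preceding the corollary: one direction is the Cirilo--Gollobit--Pujals implication, and the converse runs through the conjugacy to a finite product of weighted backward shifts, the Bernardes--Messaoudi equivalence on each factor, and the stability of generalized hyperbolicity under finite products and linear conjugacy. The only cosmetic difference is that you route the step ``shadowing for $\sigma_{\mathcal S}$ $\Rightarrow$ shadowing for each $B_{\omega(x)}$'' through Corollary~\ref{cor:charExpanShadowing}, whereas the paper leaves that passage implicit; your extra remark about the sum norm is a harmless sanity check.
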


\subsection{Sufficient conditions for shadowing on shift operators}

Applying Bernardes-Messaoudi characterization of the weighted backward shifts with the shadowing property (cf. \cite[Theorem 18]{BernardesMessaoudiETDS2020}) to the setting of Corollary~\ref{cor:charExpanShadowing}, and rewriting $w_n(x)$ in terms of the $S_n$'s, we have the following immediate criterion for shift operators.

\begin{corollary}\label{cor:BM}
Let $X$ be a finite dimensional Hilbert space and assume that the sequence $\mathcal S=(S_n)_{n\,\in\, \mathbb Z}$ has a basis $\cE$ with $\mathcal S-$bounded projections. Then $\sigma_{\mathcal S}\colon \ell_p( X) \to \ell_p( X)$ has the shadowing property if and only if  for every $x \in \mathcal{E}$ one of the following conditions holds: 
\begin{enumerate}
\item[(A)] 
{\scriptsize
$$\lim_{n\,\to\,+\infty} 
                \left(
                \max\left\{ \sup_{k\,\in\, \N} 
        \frac{\|S_{k-1}^{-1} \,\dots \,S_1^{-1} (x)\|}{\|S_{k+n}^{-1} \,\dots \,S_1^{-1} (x)\|},
        \sup_{k \,\ge\, n} \,\left(\frac{\|S_{-k} \,\dots\, S_{0}(x)\|}{\|S_{-k+n}\, \dots\, S_0 (x)\|}\right), 
        \sup_{0\,\le\, k\, < n}\,
        \left(\frac{\|S_{-k} \,\dots\, S_{0}(x)\|}{\|S_{-k+n}^{-1}\, \dots\, S_1^{-1} (x)\|}\right)
        \right\}
        \right)^\frac1n <1.$$}
        \item[(B)] 
        {\scriptsize
                $$\lim_{n\,\to\,+\infty} 
                \left(
                \max\left\{ \inf_{k\,\in\, \N} 
        \frac{\|S_{k-1}^{-1} \,\dots \,S_1^{-1} (x)\|}{\|S_{k+n}^{-1} \,\dots \,S_1^{-1} (x)\|},
        \inf_{k \,\ge\, n} \left(\frac{\|S_{-k} \,\dots\, S_{0}(x)\|}{\|S_{-k+n}\, \dots\, S_0 (x)\|}\right),
        \inf_{0\,\le\, k\, < n}
        \left(\frac{\|S_{-k} \,\dots\, S_{0}(x)\|}{\|S_{-k+n}^{-1}\, \dots\, S_1^{-1} (x)\|}\right)
        \right\}
        \right)^\frac1n >1.$$
        }    
        \item[(C)] 
        {\scriptsize
        $$
        \lim_{n\,\to\,+\infty}\,\,\sup_{k\,\in\, \N}\, \left(\frac{\|S_{-k-n} \,\dots\, S_{0}(x)\|}{\|S_{-k+1}\, \dots\, S_0 (x)\|}\right)^\frac1n < 1
        \quad \quad \quad   \text{and} \quad \quad \quad  
        \lim_{n\,\to\,+\infty}\, \,\inf_{k\,\in\, \N}
        \left(\frac{\|S_{k-1}^{-1} \,\dots \,S_1^{-1} (x)\|}{\|S_{k+n}^{-1} \,\dots \,S_1^{-1} (x)\|}
        \right)^\frac1n > 1. 
        \textcolor{white}{\hspace{1.6cm} aaaaaaaa}
        $$ }
\end{enumerate}
\end{corollary}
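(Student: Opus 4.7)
The plan is to combine Corollary~\ref{cor:charExpanShadowing} with the Bernardes--Messaoudi characterization of weighted backward shifts having the shadowing property, and then to unwind the weights $\omega_n(x)$ back into norms of iterates of the $S_j$'s.

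First, apply Corollary~\ref{cor:charExpanShadowing} with $\mathcal Q$ taken to be the shadowing property: since the basis $\cE$ has $\mathcal S$-bounded projections, this reduces the question of whether $\sigma_{\mathcal S}\colon \ell_p(X)\to\ell_p(X)$ has shadowing to the question of whether the scalar weighted backward shift $B_{\omega(x)}\colon \ell_p(\mathbb K)\to\ell_p(\mathbb K)$ has shadowing for every $x \in \cE$. Next, invoke \cite[Theorem 18]{BernardesMessaoudiETDS2020}: for each fixed $x \in \cE$, $B_{\omega(x)}$ has the shadowing property if and only if the weight sequence $\omega(x) = (\omega_n(x))_{n\in\mathbb Z}$ satisfies (at least) one of three conditions, each expressed as the limit, as $n \to +\infty$, of the $n$-th root of a supremum or infimum over $k$ of a finite product of consecutive weights. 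The remaining task is to translate these three conditions into (A), (B), (C) by rewriting such products in terms of norms of iterates of the $S_j$'s applied to $x$.

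The key mechanism for this translation is the telescoping structure of $\omega(x)$ built into \eqref{eq:weights}. Set $a_k = \|(S_{[1,k]})^{-1}(x)\|$ for $k \geq 0$, with the convention $a_0 = \|x\|$, and $c_k = \|S_{[-k,0]}(x)\|$ for $k \geq 0$. Then \eqref{eq:weights} gives $\omega_j(x) = a_{j-1}/a_j$ for $j \geq 1$ and $\omega_{-k}(x) = c_k/c_{k-1}$ for $k \geq 1$, while $\omega_0(x) = \|S_0(x)\|/\|x\|$. Any product $\prod_{j=r}^{s}\omega_j(x)$ of consecutive weights therefore collapses to a single ratio: for $1 \le r \le s$ to the \emph{forward} ratio $a_{r-1}/a_s$; for $r \le s \le 0$ to the \emph{backward} ratio $c_{-r}/c_{-s-1}$; and for $r \le 0 < s$ to the \emph{mixed} ratio $c_{-r}/a_s$, the intermediate factor $\|x\|$ cancelling between $\omega_0(x)$ and the first positive-index factor. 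These three shapes are precisely the three kinds of quotients that appear inside the $\max$ in conditions (A) and (B) and inside the two separate limits comprising (C); the outer $\sup_k$ or $\inf_k$ corresponds term by term to the $k$-indexed supremum or infimum in the Bernardes--Messaoudi statement.

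The main and essentially only obstacle is bookkeeping: one must align the indexing conventions of \eqref{eq:Snm}, \eqref{eq:weights} and \cite[Theorem 18]{BernardesMessaoudiETDS2020}, paying particular attention to the mixed terms indexed by $0 \le k < n$ (those corresponding to orbits which start at a non-positive time and travel forward through zero into positive times) and to the off-by-one shift $k \mapsto k-1$ that arises because $\omega_j(x)$ compares $(S_{[1,j-1]})^{-1}$ to $(S_{[1,j]})^{-1}$. Once this dictionary is fixed, conditions (A), (B), (C) are literally the three Bernardes--Messaoudi conditions after telescoping, and the corollary follows with no further argument.
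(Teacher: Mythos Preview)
Your proposal is correct and follows exactly the route the paper takes: the corollary is stated there as an immediate consequence of Corollary~\ref{cor:charExpanShadowing} together with \cite[Theorem~18]{BernardesMessaoudiETDS2020}, after rewriting the weights $\omega_n(x)$ in terms of the $S_n$'s. Your telescoping computation is precisely the ``rewriting'' step the paper leaves implicit.
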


\medskip

A shift operator $\sigma_{\mathcal{S}}$ may fail to satisfy the shadowing property even though each of the operators $S_n$ in the sequence $\mathcal S$ has that property, as the next example illustrates. 

\begin{example}[Operators without shadowing]\label{ex:no-cones}
\emph{Consider the Banach space $\mathbb R^2$ endowed with the Euclidean norm and 
$\ell^\infty(\mathbb R^2) \,=\, \big\{(x_n, y_n)_n \in {(\mathbb{R}^2)}^{\mathbb Z} \colon \, \sup_{n\,\in\, \mathbb Z} \,\|(x_n, y_n)\| \,<\,+\infty \big\}.$
Let $T\colon \mathbb R^2 \to \mathbb R^2$ be the linear invertible bounded operator given by 
$$(x,y)\in \mathbb R^2 \quad \quad \mapsto \quad \quad T(x,y)\,=\,\Big(2x, \frac12 y\Big).$$
The map $T$ is hyperbolic, hence satisfies the shadowing property. Consider now the shift operator $\sigma_\mathcal{S}$ where $\mathcal{S}=(S_n)_{n\,\in\, \mathbb Z}$ is defined by $S_n=T$ if $n$ is odd and $S_n=T^{-1}$ otherwise. This shift operator does not have the shadowing property in view of Corollary~\ref{cor:BM}.}
\end{example}


 \section{Applications}\label{se:appl}

To illustrate the scope of Theorems~\ref{thm:productfactor} and \ref{thm:productfactorBanach}, in this section we will discuss several examples of shift operators exhibiting a variety of dynamics.

\subsection{Sequences of operators with an orthogonal basis}

\begin{example}[Rotation operators]\label{ex:rot}
\emph{Consider a rotation matrix in $X = {\mathbb R}^2$ given by
\[ R_\theta =
\begin{pmatrix}
\cos \,(2\pi\theta) & -\sin \,(2\pi\theta)  \\
\sin \,(2\pi\theta) & \,\,\,\,\cos \,(2\pi\theta) 
  \end{pmatrix}
\]
where $\theta \in \,\,]0,1[$. Let $\cS = (S_n)_{n \,\in \,\Z}$ where $S_n = 1/2\,R_{\theta_n}$ for all $n \in \Z$ and $\theta_n \in \,\,]0,1[$. Then any orthogonal basis of $\R^2$ is an orthogonal basis for $\cS$. By Theorem~\ref{thm:productfactor}, we know that $\sigma_{\cS}\colon \ell_p({\mathbb R}^2) \to \ell_p({\mathbb R}^2)$ is conjugate to the product of weighted backward shifts $B_{\omega} \times B_{\omega}$, where $\omega=(w_n)_{n\,\in\, \Z}$ is given by $\omega_n=\frac12$ for every $n\in \Z$.
From Corollary~\ref{cor:charExpanShadowing}
we also conclude that $\sigma_{\mathcal S}$ has the shadowing property, and is in fact hyperbolic. }
\end{example}

\begin{example}[Diagonal operators]\label{ex:diagonal}
\emph{Consider $d \in \N$, $X = {\mathbb K}^d$ and invertible diagonal matrices
        \[S_n=
  \begin{pmatrix}
    \lambda_n(1) & 0 & \dots & 0 \\
    0 & \lambda_n(2)  & \dots & 0 \\
    \vdots & \vdots & \ddots & 0 \\
    0 & 0 & \dots & \lambda_n(d)
  \end{pmatrix}.\] 
Clearly, the canonical basis of $\mathbb \K^d$ is an orthogonal basis for the sequence $\cS = (S_n)_{n\, \in\, \Z}$. By Theorem~\ref{thm:productfactor}, the shift $\sigma_{\cS}\colon \ell_p({\mathbb K}^d) \to \ell_p({\mathbb K}^d)$ is conjugate to a finite product of weighted backward shifts. Moreover, $\sigma_{\cS}$ has the shadowing property if and only if $(\lambda_n(t))_{n\, \in\, \Z}$ satisfies Bernardes-Messaoudi criteria \cite[Theorem 18]{BernardesMessaoudiETDS2020} for every $1 \le t \le d $.
Furthermore, $\sigma_{\mathcal S}$ has the mixing property if and only if 
$(\lambda_n(t))_{n\, \in\, \Z}$ satisfies the criteria in \cite[Example 4.9]{GrosseErdmannMaguillot2011}.}

\smallskip

\emph{We note that, conversely, a product of $d$ weighted backward shifts, each defined in $\ell_p({\mathbb K})$, is conjugate to the shift operator $\sigma_\cS$ generated by the sequence $(S_n)_{n\, \in\, \Z}$ of diagonal matrices, with respect to the canonical basis in ${\mathbb K}^d$, whose entries are precisely the weights of the factors.}
\end{example}

\begin{example}[Operators with an orthogonal basis of eigenvectors]\label{ex:diagonalizable} \emph{Let $X = {\mathbb R}^2$ and consider 
\[ L =
  \begin{pmatrix}
    2 & 1  \\
    1 & 1  
  \end{pmatrix}.
\]
The matrix $L$ is not orthogonal, yet it is diagonalizable with eigenvalues $\frac{3\pm \sqrt{5}}2$ and eigenvectors $\cE = \big\{\big(1,\frac{\sqrt{5}-1}{2}\big), \,\big(1,-\frac{\sqrt{5}+1}{2}\big)\big\}$, which are orthogonal. Thus, the sequence $\mathcal S=(S_n)_{n\,\in\, \mathbb Z}$ of operators in ${\mathbb R}^2$ given by $S_n=L$ for all $n \in \Z$ has an orthogonal basis. Therefore, by Theorem~\ref{thm:productfactor}, the shift $\sigma_{\cS}\colon \ell_p({\mathbb R}^2) \to \ell_p({\mathbb R}^2)$ is conjugate to the product of weighted backward shifts $B_{\omega} \times B_{\bar \omega}$, where $\omega=(\omega_n)_{n\,\in\, \Z}$, $\bar \omega=(\bar \omega_n)_{n\,\in\, \Z}$, $\omega_n=\frac{3+ \sqrt{5}}2$ and $\bar \omega_n=\frac{3- \sqrt{5}}2$ for any $n\in \Z$. Therefore,  $\sigma_{\mathcal S}$ is hyperbolic.}
\end{example}

\subsection{Operators without an orthogonal basis}

We now address a new class of operators to obtain dynamical information in case the sequence of operators does not have an orthogonal basis.

\subsubsection{Jointly diagonalizable operators}
\label{subsec:diagonalizable}

\begin{definition}[Jointly diagonalizable]
Let $X$ be a finite dimensional Banach space. A sequence ${\mathcal S} = (S_n)_{n \,\in\, \Z}$ of operators on $X$ is said to be \emph{jointly diagonalizable} if there is a linear invertible bounded operator $\mathcal L$ such that, for every $n \in \Z$,  there exists a diagonal operator $D_n$ satisfying $S_n = \mathcal L D_n {\mathcal L}^{-1}$.
\end{definition}

\begin{proposition}\label{cor:jointlydiagonable} Let $X$ be a Banach space with dimension $1 \leq d < +\infty$. Suppose that the sequence ${\mathcal S} = (S_n)_{n \,\in\, \Z}$ of operators on $X$ is jointly diagonalizable. Then the shift $\sigma_{\mathcal S}\colon \ell_p(X) \to \ell_p(X)$ is conjugate to the product of $d$ weighted backward shifts, each one defined on $\ell_p(\mathbb K)$. 
\end{proposition}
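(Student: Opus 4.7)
The strategy is to use the operator $\mathcal{L}$ from the definition of joint diagonalizability to conjugate $\sigma_{\mathcal{S}}$ to the shift operator generated by the diagonal sequence $(D_n)_{n\,\in\,\Z}$, and then to invoke the already-established analysis of shifts of diagonal operators (Example~\ref{ex:diagonal}) to finish.

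First, I would lift $\mathcal{L}$ to a coordinatewise map $\tilde{\mathcal{L}}\colon \ell_p(X) \to \ell_p(X)$ defined by
$$\tilde{\mathcal{L}}\bigl((x_n)_{n\,\in\,\Z}\bigr) \,=\, (\mathcal{L}(x_n))_{n\,\in\,\Z}.$$
Since $\mathcal{L}$ is a bounded invertible operator on the finite-dimensional space $X$, the operator $\tilde{\mathcal{L}}$ is bounded and invertible on $\ell_p(X)$, with inverse given by the coordinatewise application of $\mathcal{L}^{-1}$. A direct computation, using the hypothesis $S_n = \mathcal{L} D_n \mathcal{L}^{-1}$, yields the intertwining identity
$$\tilde{\mathcal{L}}^{-1} \circ \sigma_{\mathcal{S}} \circ \tilde{\mathcal{L}} \,=\, \sigma_{\mathcal{D}}, \qquad \text{where } \mathcal{D}=(D_n)_{n\,\in\,\Z},$$
so that $\sigma_{\mathcal{S}}$ and $\sigma_{\mathcal{D}}$ are conjugate as bounded invertible operators on $\ell_p(X)$.

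Next, I would fix a basis $\cE=(b_1,\dots,b_d)$ of $X$ with respect to which every $D_n$ is diagonal. The coordinate isomorphism $X \cong \K^d$ induced by $\cE$ extends coordinatewise to a linear homeomorphism $\ell_p(X) \cong \ell_p(\K^d)$ (both norms on the $d$-dimensional space are equivalent, so the sequence space norms are comparable) that intertwines $\sigma_{\mathcal{D}}$ with the shift on $\ell_p(\K^d)$ generated by the corresponding sequence of diagonal matrices. Example~\ref{ex:diagonal} then states that such a shift is conjugate to the product of $d$ weighted backward shifts on $\ell_p(\K)$, one for each diagonal coordinate, with weights read off from the diagonal entries of the $D_n$'s.

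Composing the three conjugacies just described produces the desired conjugacy between $\sigma_{\mathcal{S}}$ and a product of $d$ weighted backward shifts on $\ell_p(\K)$. The only step requiring any real verification is the intertwining identity $\sigma_{\mathcal{S}}\circ\tilde{\mathcal{L}} = \tilde{\mathcal{L}}\circ \sigma_{\mathcal{D}}$, a short calculation that is the main (and only modest) obstacle; beyond that, the joint diagonalizability hypothesis has, by design, reduced the problem to the diagonal case already handled in Example~\ref{ex:diagonal}.
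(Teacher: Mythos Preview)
Your proposal is correct and follows essentially the same approach as the paper: conjugate $\sigma_{\mathcal S}$ to $\sigma_{\mathcal D}$ via the coordinatewise lift of $\mathcal L$, then invoke Example~\ref{ex:diagonal}. The paper isolates the coordinatewise-lift step as a general lemma (Lemma~\ref{le:coordinate-conjugacy}) rather than doing it by hand, but the content is the same.
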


We start by proving a general lemma regarding the conjugation of shift operators.

\begin{lemma}\label{le:coordinate-conjugacy}
Let $X,Y$ be Banach spaces, $\cS = (S_n)_{n\, \in\, \Z}$  be a sequence of linear invertible bounded operators on $X$ and $\mathcal T = (T_n)_{n\, \in\, \Z}$ be a sequence of linear invertible bounded operators on $Y$. Assume that ${\bf B}_X \subset X^{\mathbb Z}$ and ${\bf B}_Y \subset Y^{\mathbb Z}$ are Banach spaces such that $\sigma_{\mathcal S}\colon {\bf B}_X \to {\bf B}_X$ and $\sigma_{\mathcal T}\colon {\bf B}_Y \to {\bf B}_Y$ are well defined shift operators. In addition, suppose that there exists a bounded invertible linear operator $H\colon X \to Y$ such that $H\circ S_n = T_n \circ H$ for every $n \in \Z$, and $H^{\mathbb Z}({\bf B}_X)={\bf B}_Y$. Then $\sigma_{\mathcal S}$ is conjugate to $\sigma_{\mathcal T}$ by the map $\mathcal{H}\colon {\bf B}_X \to {\bf B}_Y$, where $\mathcal{H}((x_n)_{n\,\in\, \mathbb Z}) = (H(x_n))_{n\,\in\, \Z}$.
\end{lemma}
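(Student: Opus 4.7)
The plan is to check, in turn, that the coordinatewise map $\mathcal{H}$ is a well-defined linear bijection, that it intertwines the two shifts by a direct coordinate computation, and that it is a topological isomorphism via the closed graph theorem.

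First, I would verify that $\mathcal{H}\colon {\bf B}_X \to {\bf B}_Y$ is a linear bijection. Well-definedness (values lying in ${\bf B}_Y$) and surjectivity are exactly the content of the hypothesis $H^{\mathbb Z}({\bf B}_X)={\bf B}_Y$; linearity is inherited from $H$; injectivity follows at once from the invertibility of $H$ applied coordinate by coordinate. The natural inverse is $\mathcal{H}^{-1}((y_n)_{n\in\Z}) = (H^{-1}(y_n))_{n\in\Z}$, and it lands in ${\bf B}_X$ for the same reason. Next, I would establish the intertwining relation $\mathcal{H}\circ \sigma_{\mathcal S} = \sigma_{\mathcal T}\circ \mathcal{H}$ by the following direct computation, valid for every $(x_n)_{n\in\Z}\in {\bf B}_X$:
\[
\mathcal{H}\bigl(\sigma_{\mathcal S}((x_n)_{n\in\Z})\bigr) = \bigl(H(S_{n+1}(x_{n+1}))\bigr)_{n\in\Z} = \bigl(T_{n+1}(H(x_{n+1}))\bigr)_{n\in\Z} = \sigma_{\mathcal T}\bigl(\mathcal{H}((x_n)_{n\in\Z})\bigr),
\]
where the middle equality is precisely the assumption $H\circ S_{n+1} = T_{n+1}\circ H$.

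The remaining point, which I expect to be the only delicate step, is continuity. I would apply the closed graph theorem: suppose $x^{(k)}\to x$ in ${\bf B}_X$ and $\mathcal{H}(x^{(k)})\to y$ in ${\bf B}_Y$. Since ${\bf B}_X$ and ${\bf B}_Y$ are the Banach sequence spaces on which the shift operators $\sigma_{\mathcal S}$ and $\sigma_{\mathcal T}$ are bounded, the coordinate evaluation maps ${\bf B}_X\to X$ and ${\bf B}_Y\to Y$ are continuous (this is implicit in the shift operator setup of Section~\ref{se:main}, since otherwise the coordinate shifts could not themselves be bounded linear maps). Consequently $x^{(k)}_n\to x_n$ in $X$ and $H(x^{(k)}_n)\to y_n$ in $Y$ for every $n\in\Z$, and continuity of $H$ forces $H(x_n)=y_n$ for all $n$, i.e.\ $\mathcal{H}(x)=y$. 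Thus $\mathcal{H}$ has a closed graph and is bounded; the bounded inverse theorem then provides continuity of $\mathcal{H}^{-1}$. The main obstacle is exactly this continuity step, because the hypotheses furnish only a set-theoretic bijection between the two Banach sequence spaces, so a functional-analytic closed graph argument is needed to promote it to a topological isomorphism; once that is in place, the coordinate computation above immediately upgrades $\mathcal{H}$ to the required linear conjugacy between $\sigma_{\mathcal S}$ and $\sigma_{\mathcal T}$.
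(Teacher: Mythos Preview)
Your intertwining computation is exactly the paper's proof; that coordinate calculation is essentially all the paper writes. The paper simply asserts in one line that ``as $H$ is a linear bounded invertible operator and $H^{\mathbb Z}({\bf B}_X)={\bf B}_Y$, so is $\mathcal{H}$'', without further argument, and then performs the same chain of equalities you give.

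Where you differ is in treating continuity more carefully via the closed graph theorem. This is a reasonable impulse, since in the stated generality boundedness of $\mathcal{H}$ is not automatic. However, your justification that coordinate evaluations ${\bf B}_X\to X$ are continuous ``since otherwise the coordinate shifts could not themselves be bounded'' is not convincing: boundedness of $\sigma_{\mathcal S}$ as a map ${\bf B}_X\to{\bf B}_X$ does not by itself force any single coordinate functional to be continuous. In practice this is harmless, because the only use of the lemma in the paper is with ${\bf B}_X={\bf B}_Y=\ell_p(X)$, where $\|\mathcal{H}((x_n))\|_p\le \|H\|\,\|(x_n)\|_p$ is immediate and no closed graph argument is needed; but if you want to keep the lemma at the stated level of generality, you should either add continuity of coordinate evaluations as an explicit hypothesis or restrict to sequence spaces (such as $\ell_p(X)$) where boundedness of $\mathcal{H}$ follows directly from that of $H$.
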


\begin{proof} We note that $\mathcal{H}$ is precisely the restriction of $H^{\Z}$ to ${\bf B}_X$. Therefore, as $H$ is a linear bounded invertible operator and $H^{\mathbb Z}({\bf B}_X)={\bf B}_Y$, so is $\mathcal{H}\colon  {\bf B}_X \to {\bf B}_Y$. Moreover, for any $(x_n)_{n\,\in\, \Z} \in {\bf B}_X$, on has
\begin{align*}
\mathcal{H}\big(\sigma_{\mathcal S}((x_n)_{n\,\in\, \Z})\big)
& = \,\mathcal{H}\big((S_{n+1}(x_{n+1}))_{n\,\in\, \Z}\big)
\,=\, \Big(H\big(S_{n+1}(x_{n+1})\big)\Big)_{n\,\in\, \Z} \\
& = \,\big(T_{n+1}(H(x_{n+1}))\big)_{n\,\in\, \Z} 
\,=\, \sigma_{\mathcal T}\Big(\big(H(x_n)\big)_{n\,\in\, \Z}\Big)\\
& = \,\sigma_{\mathcal T} \Big(\mathcal{H}\big((x_n)_{n\,\in\, \Z}\big)\Big).
\end{align*}
\end{proof}

\begin{proof}[Proof of Proposition~\ref{cor:jointlydiagonable}]
    
Assume that $X$ is a finite dimensional Banach space and let ${\mathcal S} = (S_n)_{n \,\in\, \Z}$ be a sequence of operators on $X$ jointly diagonalizable by a linear invertible operator $\mathcal L\colon X \to X$. For every $n \in \Z$, take the diagonal operator $D_n$ satisfying $S_n = \mathcal L D_n {\mathcal L}^{-1}$. Consider the sequence ${\mathcal D} = (D_n)_{n \,\in\, \Z}$ of diagonal operators on $X$ and the corresponding shift operator $\sigma_{\mathcal D}$. Then, by Lemma~\ref{le:coordinate-conjugacy}, the operators $\sigma_{\mathcal S}\colon \ell_p(X) \to \ell_p(X)$ and $\sigma_{\mathcal D}\colon \ell_p(X) \to \ell_p(X)$ are conjugate. Moreover, by Theorem~\ref{thm:productfactor} (cf. Example~\ref{ex:diagonal}), the shift $\sigma_{\mathcal D}$ is conjugate to a finite product of weighted backward shifts. Hence, so is $\sigma_{\mathcal S}$. 
\end{proof}

\begin{example}[Jointly diagonalizable operators]\label{ex:jdo}
\emph{Let $X= {\mathbb R}^2$ and consider the sequence $\cS = (S_n)_{n\, \in\, \Z}$ of operators in $X$ such that $S_n = A$ for every $n \in \Z$, where
\[ L \,= \,\begin{pmatrix}
2 & 3\\
1 & 2 \\
\end{pmatrix}. \]
The eigenvalues of $L$ are $\lambda = 2 \pm \sqrt{3}$ with eigenvectors $(\pm \sqrt{3}, 1)$, respectively. As the eigenvectors are not orthogonal and the matrix $L$ is hyperbolic, the image by $L$ of any two orthogonal vectors is not orthogonal. Consequently, ${\mathcal S} = (S_n)_{n \,\in \,\Z}$ does not have an orthogonal basis. However, as $(S_n)_{n \, \in \,\Z}$ is jointly diagonalizable, by Proposition~\ref{cor:jointlydiagonable} we still conclude that $\sigma_{\mathcal S}$ is conjugate to the product of two weighted backward shift, and is hyperbolic.} 
\end{example}

\subsubsection{Anosov family}

In Subsubsection~\ref{subsec:diagonalizable} we assumed a joint diagonalization condition on the sequence $(S_n)_{n \, \in \,\Z}$. In what follows, inspired by the shadowing property for non-autonomous Anosov diffeomorphisms which need not admit a common invariant splitting \cite{CastroRodriguesVarandas}, we provide an application of Theorem~\ref{thm:productfactorBanach} where such requirement is relaxed.

\begin{example}[Anosov operators]\label{ex:randomAnosov}
\emph{Let $X = {\mathbb R}^2$ and consider the hyperbolic, symmetric matrix (hence normal) 
\[ L =
\begin{pmatrix}
2 & 1  \\
1 & 1  
\end{pmatrix}.\]
Let $0 < \lambda_-< 1 <\lambda_+$ be the eigenvalues of $L$ with eigenvectors $v^-$ and $v^+$, respectively. Let $\mathbb R^2 = E^+\oplus E^-$ be the orthogonal $L-$invariant splitting given by the eigenspaces $E^+$ and $ E^-$ of $L$. In particular, there exist convex cones $\mathcal C^+$ and $\mathcal C^-$ containing $E^+$ and $E^-$, respectively, such that:
\begin{itemize}
\item[(i)] $\mathcal C^+\cap \mathcal C^- \,=\,\{0\}$
\item[(ii)] $L(\mathcal C^+) \subsetneq \mathcal C^+$ 
\item[(iii)] $L^{-1}(\mathcal C^-) \subsetneq \mathcal C^-$
\item[(iv)] there exists $\eta > 1$ such that 
$$\|L(v)\| \,>\, \eta \,\|v\| \quad  \forall\, v\in \mathcal C^+ \quad \quad \text{and} \quad \quad  
\|L^{-1}(v)\|\, > \,\eta\, \|v\| \quad \forall\, v\in \mathcal C^-.$$
\end{itemize}}

\emph{In what follows we will choose sequences of matrices whose compositions will preserve the previous cones. Let $\mathcal U$ be a small open neighborhood of $L$ in such a way that, for each $T\in \mathcal U$, one has
\begin{equation}\label{eq:conesB0}
T(\mathcal C^+) \subsetneq \mathcal C^+\, \qquad 
\text{and} \qquad 
T^{-1}(\mathcal C^-) \subsetneq \mathcal C^-
\end{equation} 
and
\begin{equation*}\label{eq:conesB}
\|T(v)\| \,\ge\,  \eta\, \|v\| 
\quad \forall\, v\in \mathcal C^+ \quad \quad \text{and}
\quad \quad \|T^{-1}(v)\| \,\ge\, \eta\, \|v\|
\quad \forall\, v\in \mathcal C^-.
\end{equation*}
Regarding the existence of such an open neighborhood of $L$ we refer the reader to \cite{katok_hasselblatt_1995}.}
\medskip

\emph{Let $\cS = (S_n)_{n \,\in\, \Z}$ be a sequence of bounded invertible linear operators on $\mathbb R^2$ such that $S_0=Id$ and 
\begin{equation*}\label{eq:choiceAnosov}
S_{-n} \in \mathcal U \quad \text{and}\quad 
S_{n} \in \mathcal U \quad \quad  \forall \,n\in \N.
\end{equation*}
This choice, together with the definition of the vectors $e_n(\cdot)$ in \eqref{eq:vectors} and the inclusions  \eqref{eq:conesB0}, imply that, for any vectors $b^+\in \cC^+$ and $b^-\in \cC^-$, one has
\begin{equation*}
e_n(b^+) \,=\,
\frac{S_{[n+1,0]}\,(b^+)}{\|S_{[n+1,0]}\,(b^+)\|} \in \,\cC^+ \qquad   \forall \,\, n <0
\end{equation*}
and 
\begin{equation*}
e_n(b^-) \,=\,
\frac{(\,S_{[1,n]}\,)^{-1}\,(b^-)}{\|(\,S_{[1,n]}\,)^{-1}\,(b^-)\|} \in \,\cC^-  \qquad \forall \,\, n \geqslant 0. 
\end{equation*}}
\medskip

\emph{We proceed by constructing special vectors $v_*^+\in \cC^+$ and $v_*^-\in \cC^-$ which additionally satisfy $$e_n(v_*^+)\in \,\cC^+ \quad  \forall\, n \ge 0 \quad\quad  \text{and} \quad \quad e_n(v_*^-)\in \,\cC^- \quad \forall \,
n < 0.$$ 
Consider the splitting
$\mathbb R^2 = E^+_*\oplus E^-_*$, where the subspaces $E^+_*$ and $E^-_*$ are defined by
\begin{eqnarray*}
E^+_* &=& \bigcap_{n \, \ge\, 1}\, S_{1}\dots S_{n} (\mathcal C^+) \,\,\subset\,\, \mathcal C^+ \\
E^-_* &=& \bigcap_{n\,< \,0} \, 
S_{0}^{-1} \dots S_{n+1}^{-1} (\mathcal C^-) \,\,\subset\,\, \mathcal C^-.
\end{eqnarray*}
As $X=\R^2$, there are unit vectors $v_*^\pm \in E_*^\pm\setminus\{0\}$. If  one considers the basis $\cE=\{v_*^-,v_*^+\}$ of $\mathbb R^2$ then, by construction,  
\begin{equation*}\label{eq:contrscones}
S_{n+1}
\dots S_{-1} S_0 \,(v_*^-) \in \mathcal C^-\quad  
\forall n < 0
    \quad \quad  \text{and}\quad \quad 
S_{n}^{-1} \dots S_{1}^{-1}\,(v_*^+)  \in \mathcal C^+ \quad \forall n \ge 1. 
\end{equation*}
Consequently, $e_n(v^\pm_*) \in \cC^\pm$ for every $n\in \mathbb Z$.}

\smallskip

\emph{We claim that the basis $\cE$ has $\mathcal S-$bounded projections. Indeed, given $\alpha, \beta \in \R$ satisfying $|\alpha|+|\beta| >0$, one has 
\begin{align*}
\|\alpha \,e_n(v_*^-) + \beta\, e_n(v_*^+)\|^2
& \,=\, \big\langle \alpha\, e_n(v_*^-) + \beta \,e_n(v_*^+),\,\, \alpha \,e_n(v_*^-) + \beta \,e_n(v_*^+) \big\rangle    \\
& \,=\, \alpha^2  \, \|e_n(v_*^-)\|^2 
    + 2\alpha\beta \, \big\langle e_n(v_*^-), \,e_n(v_*^+)\big\rangle  + \beta^2 \, \|e_n(v_*^+)\|^2 \\
& \,=\, \alpha^2  + 2\alpha\beta \, \big\langle e_n(v_*^-), \,e_n(v_*^+)\big\rangle  + \beta^2 \\
& \,\ge\,  \alpha^2 + \beta^2 -  2 \,|\alpha|\, |\beta|\,
    |\cos \theta| \\
& \,\ge\,  \big(1-|\cos \theta|\big) \;\big(\alpha^2 + \beta^2\big) 
\end{align*}
where $\theta \in \,\,]0,\pi[$ is a (uniform) lower bound for the angle between the vectors $e_n(v_*^-)$ and $e_n(v_*^+)$ for all $n \in Z$, whose existence is ensured by the separation of the cones $\cC^+$ and $\cC^-$. Consequently, given $n \in \Z$, if $x = \alpha \,e_n(v_*^-) + \beta\, e_n(v_*^+)$ then
\begin{align*}\label{ex:boundednormsBanachAnosov}
\sup_{x\,\in\, \mathbb R^2\,\setminus\, \{0\}} \,\,
 \frac{\|\Pi_{e_n(v_*^+)}(x)\|}{\|x\|} 
 & \,=\,  \sup_{|\alpha|\,+\,|\beta|\,>\,0}\,
 \frac{|\beta|}{\|\alpha \,e_n(v_*^-) + \beta\, e_n(v_*^+)\|} \\
& \,\le\, \frac1{\sqrt{1-|\cos \theta|}} \,\,
    \sup_{|\alpha|\,+\,|\beta|\,>\,0 }\,
    \frac{|\beta|}{\big(\alpha^2 + \beta^2\big)^\frac12} \\
& \,\le\, \frac1{\sqrt{1-|\cos \theta|}}.
\end{align*}    
Similar estimates are valid for the projections $\Pi_{e_n(v_*^-)}$. Therefore,  
\begin{equation*}
\sup_{n\,\in\, \Z} \,\sup_{b\,\in\, \cE} \,\sup_{x\,\in\, \mathbb R^2\,\setminus\, \{0\}}\,
  \frac{\|\Pi_{e_n(b)}(x)\|}{\|x\|} \,< \,+\infty,
    \end{equation*}
proving the claim. }

\smallskip

\emph{Now, using Theorem~\ref{thm:productfactorBanach}, we conclude that the shift operator $\sigma_{\mathcal S}$ is conjugate to the product map $B_{\omega(v_*^-)} \times B_{\omega(v_*^+)}$. Furthermore, the weighted backward shifts $B_{\omega(v_*^-)}$ and $B_{\omega(v_*^+)}$ are hyperbolic. In fact, summoning \eqref{eq:EW}, we obtain
$$
\omega_{n}(v_*^+) \,=\, \frac{\|\,S_{n+1}\big(e_{n+1}(v_*^+)\big)\|}{\|e_{n}(v_*^+)\|}
\,= \, \|\,S_{n+1}\big(e_{n+1}(v_*^+)\big)\| \,\ge\, \eta \, \|e_{n+1}(v_*^+)\| \,=\, \eta \,> \, 1 \quad \quad \forall\, n \in \mathbb Z
$$ 
and, similarly, since 
$$\big\|S_{n+1}^{-1}\,\big(S_{n+1} \,\big(e_{n+1}(v_*^-)\big)\big)\big\| \,\ge\, \eta \,\big\|S_{n+1} \,\big(e_{n+1}(v_*^-)\big)\big\| \quad \quad \forall\, n\in \Z$$
one gets
$$\omega_{n}(v_*^-) \,=\, \frac{\|\,S_{n+1}\big(e_{n+1}(v_*^-)\big)\|}{\|e_{n}(v_*^-)\|}
\,= \, \|\,S_{n+1}\big(e_{n+1}(v_*^-)\big)\|
\,\le\, \eta^{-1} \, < \, 1 \quad \quad \forall\, n \in \mathbb Z. $$
In particular, by Corollary~\ref{cor:BM}, the shift $\sigma_{\mathcal S}$ satisfies the shadowing property.}
\end{example}

\subsubsection{Sequences of elliptic and hyperbolic matrices}
In this subsection we will provide examples of shift operators $\mathcal S = (S_n)_{n\,\in\, \mathbb Z}$ so that, despite the operators $S_n$ fail to preserve common cones, the shift $\sigma_{\mathcal S}$ is still conjugate to a finite product of weighted backward shifts. Furthermore, some of these examples satisfy the shadowing property while others do not.   

\begin{example}[Elliptic and hyperbolic matrices]\label{ex:hyp-ellliptic}
\emph{Consider the linear operators on $\mathbb R^2$ given by the matrices
$$
R_{2\pi\zeta}\, = \left(
\begin{array}{cc}
     \cos \,(2\pi\zeta) & -\sin \,(2\pi\zeta)   \\
      \sin \,(2\pi\zeta) & \,\,\,\cos \,(2\pi\zeta) 
\end{array}
\right), \quad \zeta \in \mathbb R\setminus \mathbb Q
\qquad \text{and} \qquad 
L \,= \left(
\begin{array}{cc}
     2 & 0  \\
      0 & \frac12   
\end{array}
\right).
$$
The matrix $R_{2\pi\zeta}$ determines an irrational rotation of the plane, whereas $L$ is a hyperbolic matrix. Thus, matrices $R_{2\pi\zeta}$'s neither preserve any common proper subspaces of $\mathbb R^2$ nor admit a common proper invariant cone. In what follows, we will consider sequences $\mathcal S=(S_n)_{n\,\in\, \mathbb Z}$ of operators on $\R^2$ where $S_0=Id$ and, for each $n \in \Z \setminus \{0\}$, $S_n$ is either $R_{2\pi\zeta}$ or $L$, chosen in a way that the shift operator $\sigma_{\mathcal S}$ is conjugate to a product of weighted backward shifts.}

\smallskip

\emph{Since $L$ is hyperbolic, there exist convex cones $\mathcal C^+$ and $\mathcal C^-$ satisfying the properties (i)-(iv) as in Example~\ref{ex:randomAnosov}. On the other hand, given  an irrational $\zeta$, there exists a strictly increasing sequence $(q_\ell)_{\ell\,\in\, \N}$ of positive integers such that 
$ \lim_{\ell \, \to \, +\infty}\, q_\ell \,\zeta \, (\text{mod }1) \,=\, 1$. 
Therefore, the sequence of matrices $\big(R_{2\pi\zeta}^{q_\ell}\big)_{\ell \, \in \, \N}$ converges, as $\ell$ goes to $+\infty$, to the identity matrix. Moreover, taking a subsequence if necessary, we may assume without loss of generality that, for every $\ell \in \N$, 
\begin{equation}\label{eq:invariance-1}
R_{2\pi\zeta}^{q_\ell} \,(\, L(\cC^+)\,)\, \subsetneq \,\mathcal \cC^+ \quad \quad \text{and} \quad \quad 
L(R_{2\pi\zeta}^{q_\ell} \,(\, \cC^+\,))\, \subsetneq \,\mathcal \cC^+
\end{equation}
\begin{equation}\label{eq:invariance-2}
L^{-1}(R_{2\pi\zeta}^{-q_\ell} \,(\, \cC^-\,))\, \subsetneq \,\mathcal \cC^- \quad \quad \text{and} \quad
\quad R_{2\pi\zeta}^{-q_\ell} \,(\, L^{-1}(\cC^-\,)))\, \subsetneq\, \mathcal \cC^-.
\end{equation}}

\smallskip

\emph{We will now consider sequences $\mathcal S=(S_n)_{n\,\in\, \mathbb Z}$ of the form
\begin{equation}\label{eq:specialchoiceSn}
\dots \underbrace{L \dots L}_{n_{2}} \;  \underbrace{R_{2\pi\zeta} \dots R_{2\pi\zeta}}_{m_{1}} \; \underbrace{L \dots L}_{n_1} \, \overset{\,\,\downarrow}{Id}  \, \underbrace{L \dots L}_{n_1}  \; \underbrace{R_{2\pi\zeta} \dots R_{2\pi\zeta}}_{m_1} \; \underbrace{L \dots L}_{n_2} \dots 
\end{equation}
for suitable choices of sequences of positive integers $(n_i)_{i\, \in\, \N}$ and $(m_i)_{i\,\in \, \N}$, where $\downarrow$ marks the position zero. Given any such sequence $\mathcal S$ of linear operators,
consider the splitting
$\mathbb R^2 = E^+_*\oplus E^-_*$, where the subspaces $E^+_*$ and $E^-_*$ are defined by
\begin{eqnarray*}
E^+_* &=& \bigcap_{\ell \, \ge \,1}\, L^{n_1} R_{2\pi\zeta}^{m_1} \dots L^{n_\ell} R_{2\pi\zeta}^{m_\ell} \,\,(\mathcal C^+) \,\subset\, \mathcal C^+ \\
E^-_* &=& \bigcap_{\ell\, \ge\, 1} \, 
 L^{-n_1} R_{2\pi\zeta}^{-m_1} \dots L^{-n_\ell} R_{2\pi\zeta}^{-m_\ell} \,\,(\mathcal C^-) \,\subset\, \mathcal C^-.
\end{eqnarray*}
If $v_*^\pm \in E_*^\pm\setminus\{0\}$ are unit vectors and one considers the basis 
$\cE = \{v_*^-,v_*^+\}$ of $\mathbb R^2$ then, by construction, as the cones $\mathcal C^-$ and $\mathcal C^+$ are sufficiently separated and invariant (cf. \eqref{eq:invariance-1} and \eqref{eq:invariance-2}), there exists $\theta>0$ such that 
$$\measuredangle (e_n(v_*^+), e_n(v_*^-))\,\ge \,\theta\,>\,0 \qquad \forall \,n\in \Z.$$
In particular, due to Corollary~\ref{cor:separatedbasis-2}, $\cE$ has $\mathcal S-$bounded projections.
Therefore, by Theorem~\ref{thm:productfactorBanach},
the shift operator $\sigma_{\mathcal S}$ is conjugate to the product $B_{\omega(v_*^+)} \times B_{\omega(v_*^-)}$. 
}
\end{example}

\begin{example}[Selection of the frequencies $(n_i)_{i\, \in\, \N}$ and $(m_i)_{i\,\in \, \N}$]\label{ex:hyp-ell-shadowing}
\emph{We may change the frequency with which $L$ occurs in the sequence $(S_n)_{n\,\in\, \mathbb Z}$, as illustrated in \eqref{eq:specialchoiceSn}, to study its impact on the properties of the shift operator $\sigma_{\mathcal S}$. We consider the following two cases.}

\medskip

\noindent \emph{$(1)$} Unbounded gaps: $\, \sup_{i\, \in \, \N} \,m_i \,= \,+\infty$.
\medskip

\noindent \emph{In this case, for every $k\in \Z$,
$$\limsup_{n\,\to\,+\infty}\,\, {\|S_{-k-n} \,\dots\, 
S_{-k}(v_*^{-})\|}^\frac1n \,> \,1 \qquad   \text{and} \qquad 
\liminf_{n\,\to\,+\infty}\, \, 
{\|S_{k+n}^{-1} \,\dots \,S_k^{-1} (v_*^{+})\|}^{-\frac1n} \,< \,1. $$
 However, since there exist sufficiently large numbers of consecutive occurrences of rotation matrices $R_{2\pi\zeta}$ in the sequence $\mathcal S=(S_n)_{n\,\in\, \Z}$, we conclude that, for any $n\in \N$ there is $k\in \N$ such that, for all $x \in \R^2 \setminus \{0\}$,
$$\|S^{-1}_{k+n} \dots S_k^{-1} (x)\| \,=\, \|S_{-k-n} \dots S_{-k} (x)\| \, = \, \|x\|.$$
This way, the sequence $\mathcal S$ does not satisfy  
condition (C) in Corollary~\ref{cor:BM}.
The presence of arbitrary large numbers of consecutive occurrences of the elliptic matrices $R_{2\pi\zeta}$ also prevent conditions (A) and (B) in 
Corollary~\ref{cor:BM} to hold.
Therefore, $\sigma_{\mathcal S}$ does not satisfy the shadowing property.}
\medskip 

\noindent
\emph{$(2)$ } Bounded gaps: $\, \sup_{i\, \in \, \N} \,m_i \ < \,+\infty$.

\medskip

\noindent \emph{Let $M = 1+\sup_{i\, \in \, \N} \,m_i$. We note that 
\begin{equation}\label{eq:fulldensity1B}
  \inf_{k \, \in\, \N}\,\,\frac{\#\,\big\{k \le j \le k+M \colon \,S_j = L \big\}}{M} \,\geq\, \frac{1}{M}.
\end{equation}
Using \eqref{eq:EW}, we know that, for any $n\in \mathbb Z$,
$$\omega_{n}(v_*^+) \,= \, \|S_{n+1}\,\big(e_{n+1}(v_*^+)\big)\| \quad
\text{and} \quad \omega_{n}(v_*^-) \,= \, \|S_{n+1}\,\big(e_{n+1}(v_*^-)\big)\|$$
and so, whenever $S_{n+1} = L$, one has
$$\omega_{n}(v_*^+) \,\ge \,\eta\, > \, 1 \quad \text{and}\quad \omega_{n}(v_*^-) \,\le\, \eta^{-1}\,< \, 1 \quad \quad \forall\, n \in \Z;$$
otherwise, $S_{n+1} = R_{2\pi\zeta}$ and
$$\omega_{n}(v_*^+) \,=\,\omega_{n}(v_*^-)\,=\,1.$$ 
Therefore, for every $k \in \N$ and $n \in \N$,  
$$\big|\omega_k(v_*^+) \dots \omega_{k+n}(v_*^+)\big|^{\frac1n} \,\,=\,\, \eta^{\,\,\frac{\#\,\big\{k \,\le \,j \,\le \,k+n \colon \,\,S_j = L \big\}}{n}}$$
which yields
\begin{equation}\label{eq:aux}
\inf_{k\,\in\, \N} \,\big|\omega_k(v_*^+) \dots \omega_{k+n}(v_*^+)\big|^{\frac1n} \,=\inf_{k\,\in\, \N} \,\eta^{\,\frac{\#\,\big\{k\, \le\, j \,\le\, k+n \colon \,S_j = L \big\}}{n}}\,\geq\,\eta^{\,\inf_{k\,\in\, \N} \,\frac{\#\,\big\{k\, \le\, j \,\le\, k+n \colon \,S_j = L \big\}}{n}}.
\end{equation}
Summoning \eqref{eq:fulldensity1B}, we conclude that for the positive integer $M$ one has
$$\inf_{k\,\in\, \N} \,\big|\omega_k(v_*^+) \dots \omega_{k+M}(v_*^+)\big|^{\frac1M} \,\,\geq\,\,\eta^{\,\inf_{k\,\in\, \N} \,\frac{\#\,\big\{k\, \le\, j \,\le\, k+M \colon \,S_j = L \big\}}{M}} \,\geq \,\,\eta^{\frac{1}{M}} \,> \,1.$$
Thus, by \cite[Lemma 19]{BernardesMessaoudiETDS2020}), the following limit exists and satisfies 
\begin{equation*}
\lim_{n\,\to\,+\infty} \,\,\inf_{k\,\in\, \N} \,\big|\omega_k(v_*^+) \dots \omega_{k+n}(v_*^+)\big|^{\frac1n} \,> \,1.
\end{equation*}
Analogous estimates show that
$$\lim_{n\,\to\,+\infty} \,\,\sup_{k\,\in\, \N} \,\big|\omega_k(v_*^-) \dots \omega_{k+n}(v_*^-)\big|^{\frac1n} \,<\, 1.$$
Therefore, 
item (C) of Corollary~\ref{cor:BM} implies that the shift operator $\sigma_{\mathcal S}$ satisfies the shadowing property. Moreover, as $\sigma_{\mathcal S}$ is conjugate to the product of two weighted backward shifts with the shadowing property, we have that it is generalized hyperbolic.}
\end{example}

\medskip

\subsubsection{Shift operators conjugate to skew-products of weighted shifts}\label{ex:notproduct}
We now present an example of a sequence $\mathcal S=(S_n)_{n\,\in\, \mathbb Z}$ without $\mathcal S-$bounded projections and whose shift operator is not linearly conjugate to the product of the two weighted backward shifts associated to the weights $\omega((1,0))$ and $\omega((0,1))$; it turns out to be conjugate to a skew-product of weighted backward shifts.

\begin{example}[$\mathcal S-$unbounded projections]\label{discontinuousprojections} 

\emph{Let $X=\mathbb R^2$ endowed with the norm given by $\|(x,y)\|=\max\,\{|x|,|y|\}$ and $\mathcal S=(S_n)_{n\,\in\, \Z}$ be the sequence of bounded linear operators expressed by the matrix 
\[ S_n \,=\, L \,= \,
 \begin{pmatrix}
    1 & 1  \\
    0 & 1  
 \end{pmatrix}
  \qquad \forall\, n\in \Z.
\]
A simple computation shows that, for every $(x_n,y_n)_{n\,\in\, \Z} \in \ell_p(\mathbb R^2)$ and any $k\in \Z$, one has
\begin{equation}\label{eq:iter}
\sigma_{\mathcal S}^k\,\big(\,(x_n,y_n)_{n\,\in\, \Z}\,)\big) \,
 = \,\big(x_{n+k}+k \,y_{n+k}, {\,y_{n+k})_{n\,\in\, \Z}}.
\end{equation}
Moreover, for every $n \in \N$, 
$$
e_{-n}(1,0)=\frac{L^n(1,0)}{\|L^n(1,0)\|}=(1,0)
    \quad\quad \text{and}\quad \quad 
e_{-n}(0,1)=\frac{L^n(0,1)}{\|L^n(0,1)\|}=\frac{(n,1)}{n}=\Big(1,\, \frac1n\Big).
$$
Hence, the angle between these two vectors goes to zero as $n\to+\infty$ and
\begin{equation*}
\big\|\Pi_{e_{-n}(0,1)}\big\| \, \ge\, \frac{\|\Pi_{e_{-n}(0,1)}\big((0,1)\big)\|}{\|(0,1)\|} \,=\, \big\|\Pi_{e_{-n}(0,1)}\big(-n\,e_{-n}(1,0) + n\, e_{-n}(0,1))\big\| \,=\,  n.    
\end{equation*}
Consequently, the canonical basis $\cE=\{(1,0),(0,1)\}$ does not have $\mathcal S-$bounded projections. A similar argument shows that every basis $\cE$ of $\mathbb R^2$ has no $\mathcal S-$bounded projections. } 

\smallskip


\emph{We now verify that the shift operator $\sigma_{\mathcal S} \colon \ell_p(\mathbb R^2) \to \ell_p(\mathbb R^2)$ is conjugate to the skew-product of two weighted backward shifts. By the choice of the norm in $\mathbb R^2$, the linear map $\phi\colon \ell_p(\mathbb R^2) \to \ell_p(\mathbb R) \times \ell_p(\mathbb R)$ given by 
$$\phi\big((x_n,y_n)_{n\,\in\,\Z}\big)\,=\, \big((x_n)_{n\,\in\,\Z}, \,(y_n)_{n\,\in\,\Z}\big)$$
is an isomorphism.
Moreover,
\begin{align*}
\phi \circ  \sigma_{\mathcal S} \circ \phi^{-1}\,
\big((x_n)_{n\,\in\,\Z}, \,\,(y_n)_{n\,\in\,\Z}\big) 
& = \,\big(\phi \circ  \sigma_{\mathcal S}\big) \,\big((x_n,y_n)_{n\,\in\,\Z}\big) \\
& =\, \phi\, \big((x_{n+1} + y_{n+1}, \,y_{n+1})_{n\,\in\, \Z}\big) \\
& =\, \big((x_{n+1} + y_{n+1})_{n\,\in\, \Z},\,(y_{n+1})_{n\,\in\, \Z}\big) \\
& =\, \big((x_{n+1})_{n\,\in\, \Z} + (y_{n+1})_{n\,\in\, \Z}, \,(y_{n+1})_{n\,\in\, \Z}\big) \\
& =\, \widehat \sigma_{\mathcal S} \,\big((x_n)_{n\,\in\,\Z}, \ ,(y_n)_{n\,\in\,\Z}\big) 
\end{align*}
where $\widehat \sigma_{\mathcal S}$ is the skew-product 
defined by 
$$
\begin{array}{rccc}
  \widehat \sigma_{\mathcal S}\colon  & \ell_p(\mathbb R) \times \ell_p(\mathbb R) & \quad \to \quad & \ell_p(\mathbb R) \times \ell_p(\mathbb R) \\
     & \big((x_n)_{n\,\in\,\Z},\, (y_n)_{n\,\in\,\Z}\big) & \quad \mapsto \quad &
     \Big(B_\omega\,\big((x_n)_{n\,\in\,\Z}\big) + B_\omega\,\big((y_n)_{n\,\in\,\Z}), \,B_\omega\, \big((y_n)_{n\,\in\,\Z}\big)\Big)
\end{array}
$$
and $B_\omega$ is the weighted shift given by
\begin{equation*}\label{eq:Bw}
B_\omega \colon \ell_p(\mathbb R)\to \ell_p(\mathbb R)
\end{equation*}
with weights $\omega = (\omega_n)_{n \, \in \, \Z}$ and   $\omega_n = 1$ for all $n\in \Z$.}

\smallskip

\emph{This example shows that Theorem~\ref{thm:productfactorBanach} is sharp, that is, despite the fact that $\mathcal E = \{(1,0),(0,1)\}$ is a basis for $\R^2$, $\sigma_{\mathcal S}$ fails to be linearly conjugate to $B_{\omega((1,0))} \times B_{\omega((0,1))}$. Indeed, an easy computation shows that the weights for $B_{\omega((1,0))}$ are
\[\omega_n = 1  \ \ \ \forall n \in \Z\] 
and the weights for $B_{\omega((0,1))}$ are
\[\omega_n = \begin{cases}
    1 & n=0,1\\
    \frac{n-1}{n} & n \ge 2\\
    \frac{|n|+1}{|n|} & n \le -1.
\end{cases}
\]
As the weights of $B_{\omega((0,1))}$ are bi-Lipschitz equivalent to the constant sequence 1, we have that $B_{\omega((0,1))}$ is linearly conjugate to $B_{\omega((1,0))}$. In particular, if we let $T = B_{\omega((1,0))} \times B_{\omega((0,1))}$, then all orbits of $T$ are bounded. However, if $v \,=\, (0,y_n)_{n\,\in\, \Z} \in \ell_p(\R^2)$ with some $y_{n_0}\neq 0$, from \eqref{eq:iter} we deduce that
$$\lim_{k \, \to \, +\infty}\,\|\sigma_{\mathcal S}^k(v)\| \,= \, +\infty,$$ 
implying that $\sigma_S$ is not linearly conjugate to $B_{\omega((1,0))} \times B_{\omega((0,1))}$.}
\end{example}

We note that the shift operator $\sigma_{\mathcal S}$ does not have the shadowing property and hence it is not generalized hyperbolic. 
This is so because the shadowing property is preserved by factors, and $B_{\omega((1,0))}$, a factor of $\sigma_{\mathcal S}$, does not have the shadowing property as all of its weights are 1's.


\section{Proofs}\label{se:proofs}

\subsection{Dissipative operators on Banach spaces are shift operators}

This subsection is devoted to the proof of Theorem~\ref{thm:dissipative2shifts}. Let $(E_n)_{n\,\in\, \mathbb Z}$ be a {dissipative decomposition of $X$}, where  $E_0\subset X$ is a closed subspace, $E_n = T^n(E_0)$ for every $n\in \mathbb Z$ and
$$X = \overline{\bigoplus_{n\,\in\, \mathbb Z} E_n}.$$

Given $x\in X$, there exist unique vectors $x_n\in E_n$, for $n\in \mathbb Z$, such that $x = \sum_{n\,\in\, \mathbb Z} x_n$. Let $H\colon X \to {E_0}^{\mathbb Z}$ be defined by
$$H(x) = (T^{n}(x_{-n}))_{n\,\in\, \mathbb Z}.$$ 
By the uniqueness of the decomposition and the linearity of $T$, the map $H$ is well defined, linear and injective. We endow the vector space ${\bf B} = H(X)$ with the norm $\|\cdot\|_{\bf B}$ given, for each $(y_n)_{n\,\in\, \mathbb Z}\in {\bf B}$, by
$$\|(y_n)_{n\,\in\, \mathbb Z}\|_{\bf B} \,=\, \|x\|_X$$
where $H(x) = (y_n)_{n\,\in \,\mathbb Z}$. Therefore, $H$ is continuous and ${\bf B}$ is a Banach space. Thus, by the Open mapping theorem, $H$ is open.

\medskip

Now consider the sequence of linear operators $\mathcal S=(S_n)_{n\,\in\, \mathbb Z}$ acting on $E_0$ so that $S_n$ is the identity for each $n\in \mathbb Z$. We claim that 
$$
\sigma_{\mathcal S} \circ H = H\circ T 
$$
where 
$\sigma_{\mathcal S}\colon {\bf B} \to {\bf B}$ is the shift operator determined by $\mathcal S$. Indeed, let $x\in X$ given by $x=\sum_{n\,\in\, \mathbb Z} \,x_n$, with $x_n\in E_n$. Then $T(x) = \sum_{n\,\in\, \mathbb Z} T(x_n)$, and so 
$$H(T(x)) \,=\, \big(T^n(T(x_{-n-1}))\big)_{n\,\in\, \mathbb Z}\,=\, \big(T^{n+1}(x_{-(n+1)})\big)_{n\,\in\, \mathbb Z}.$$
Analogously, 
$$
\sigma_{\mathcal S}(H(x))
   \, = \,\sigma_{\mathcal S} \big((T^{n}(x_{-n})_{n\,\in\, \mathbb Z}\big)
   \, =\, \big(T^{n+1}(x_{-(n+1)})\big)_{n\,\in\, \mathbb Z}.
$$
This completes the proof of the theorem.

\subsection{Dissipative composition operators are shift operators}

In this subsection we will prove Theorem~\ref{thm:shift-like2shift}.
Assume $(M,{\mathfrak{B}},\mu, f)$  and $T_f$ are as stated in Theorem~\ref{thm:shift-like2shift}. We need to find a suitable Banach space $Y$, a sequence $\mathcal{S}= (S_n)_{n \, \in \, \Z} $ of invertible linear operators  on $Y$ and a Banach space ${\bf B} \subset Y^\mathbb Z$, such that $ ({\bf B}, \sigma_ {\mathcal S})$ is conjugate to $T_f$.
\smallskip

To this end, let $Y = L^p (W, \mu)$ and consider the vector space  
$$
{\bf B} \,=\, \Big\{ (\psi_n)_{n\,\in \,\mathbb Z} \in Y^{\mathbb Z} \colon \sum_{n\,\in\, \mathbb Z}\, \int_W |\psi_n|^p \,\frac{d\mu f^{n}}{d\mu} \, d\mu < +\infty \Big\}
$$ 
endowed with the norm 
$$
\|(\psi_n)_{n\,\in \,\mathbb Z}\|_{\bf B} \,=\, \left(\sum_{n\,\in\, \mathbb Z} \, \int_W |\psi_n|^p \,  \frac{d\mu f^{n}}{d\mu}\, d\mu\right)^{\frac{1}{p}}.
$$
Next we define a map $\Gamma\colon L^p(M, \mu) \rightarrow \bf B$ as follows: for each $\varphi \in L^p(M, \mu)$, let $\psi = \Gamma (\varphi) \in \bf B$ be given by $\psi_n = \varphi \circ f^n$. Note that the domain of $\psi_n$ is $W$. 
\smallskip

We begin by showing that $\Gamma$ is well defined and $1-$Lipschitz: 
\begin{align*}
    \| \Gamma (\varphi) \|_{\bf B} ^p 
     = 
     \| \psi \|_{\bf B}^p
     & =  \sum _{n \,\in\, \Z} \,\int_W |\psi_n|^p \,
     \frac{d\mu f^{n}}{d\mu} d\mu\\
     & = \sum _{n \,\in\, \Z} \, \int _W | \psi_n|^p \,{d\mu f^{n}}  \\
    & = \sum _{n \,\in\, \Z} \, \int _{f^n(W)} |\psi_n|^p\circ f^{-n} \, d\mu \\
& = \sum _{n \,\in\, \Z} \, \int _{f^n(W)} |\varphi|^p \, d\mu \\
    & =  \| \varphi\|_{L^p(M, \mu)}.
\end{align*}
Afterwards, we show that $\Gamma$ maps onto $\bf B$ and its inverse is also $1-$Lipschitz. Indeed, let $\psi  = (\psi_n)_{n \,\in\, \Z}  \in \bf B$. We define  $\varphi\colon M \rightarrow M $ by $\varphi (x) = \psi_n(f^{-n}x)$, where $n \in \Z$ is the unique integer such that $x \in f^n(W)$. Then
    \begin{align*}
    \|\varphi \|_{L^p(M)}^p & = \sum _{n \,\in\, \Z}  \int _{f^n(W)} | \varphi|^p \, d\mu\\  
    & =\sum _{n \,\in\, \Z}  \int _{f^n(W)} | \psi_n |^pf^{-n}  \,d\mu\\
    & = \sum _{n \,\in\, \Z}  \int _{W} | \psi_n|^p \,{d \mu f^{n}} \\
& = \sum _{n \,\in\, \Z}  \int _{W} | \psi_n|^p \,\frac{d \mu f^{n}}{d \mu} \, d\mu \\
    &=  \|\psi \|_{\bf B}^p
\end{align*}
confirming that $\Gamma$ is surjective and its inverse is $1-$Lipschitz as well. Consequently, $\Gamma$ is an isometry and $\bf B$ is a Banach space. 
\smallskip

For each $n \in \Z$, let  $S_n\colon Y \rightarrow Y$ be the identity map and $\cS = (S_n)_{n \, \in \, \Z}$. We next show that $\Gamma$ is a conjugacy between $T_f$ and $\sigma_{\cS}$.  Indeed, given $\varphi \in L^p(M, \mu)$, one has
$$\Gamma (T_f(\varphi)) \, =\, \Gamma (\varphi \circ f) 
\, =\,\big((\varphi \circ f)f^n|_W\big)_{n\, \in\, Z} \,=\, \big(\varphi \circ f^{n+1}|_W\big)_{n\, \in\, \Z}$$
and 
$$\sigma_{\cS} \big(\Gamma (\varphi)\big) \, =\, \sigma_{\cS} \big((\varphi \circ f^n|_W)_{n \,\in\, \Z} \big) \, = \,\big(\varphi \circ f^{n+1}|_W\big)_{n \,\in \,Z}.$$
The proof of the theorem is complete.

\begin{remark}\label{rem:RN}
Whenever there is $C >0 $ such that 
\begin{equation}\label{eq:RNK}
\frac{1}{C}  \,<\, \big\|\frac{d\mu f^n}{d \mu}\big\|_{\infty} \,<\, C \quad \quad \forall\, n \in \Z
\end{equation}
the Banach space $\bf B$ provided by Theorem~\ref{thm:shift-like2shift} is isomorphic to $\ell_p( L^p(\mu\mid_W))$ and $\sigma_{\mathcal S}$ is conjugate to the usual shift map on $\ell_p( L^p(\mu\mid_W))$. 
\end{remark}


\subsection{Weighted shifts as factors of shift operators}

Here we prove Theorem~\ref{thm:factors}. Let $\mathcal E$ be an orthogonal basis for $\mathcal S=(S_n)_{n\in Z}$.
Given $x\in \mathcal E$ recall that 
\begin{equation*}
e_n(x) \,=\,
\begin{cases}
\begin{array}{cc}
\frac{(\,S_{[1,n]}\,)^{-1}\,(x)}{\|(\,S_{[1,n]}\,)^{-1}\,(x)\|}  & \quad \text{if} \,\, n \geq 0 \medskip \\
\frac{S_{[n+1,0]}\,(x)}{\|S_{[n+1,0]}\,(x)\|}  & \quad  \text{if} \,\, n <0
\end{array}.
\end{cases}
\end{equation*}
\medskip
and that, by construction of the weights, one has 
$$S_{n+1} \,\big(e_{n+1}(x)\big)\,=\, \omega_{n+1}(x) \,e_n(x) \,  \quad \quad \forall\, n \in Z.$$
Consider the function 
$\Gamma=\Gamma_x\colon \ell_p(X) \to \mathbb \ell_p(\mathbb K)$ given by 
\begin{equation}\label{eq:defGamma}
    \Gamma\,\big((x_n)_{n\,\in\, \mathbb Z}\big)\,=\,\big(\Pi_{e_n(x)} \,(x_n)\big)_{n \, \in \, \Z}
\end{equation}
where $\Pi_{e_n(x)}\colon X \to F_n(x)$ stands for the orthogonal projection from $X$ onto the subspace $F_n(x)$ generated by the unit vector $e_n(x)$. We claim that the linear map $\Gamma\colon \ell_p(X) \to \ell_p(\mathbb K)$ is continuous and surjective. 

As $X$ is a Hilbert space, all orthogonal projections $\Pi_z$ are uniformly bounded (with norm one), and so 
\begin{align*}
    \|\Gamma\big((x_n)_{n\,\in \,\Z}\big)\|_p^p 
    \,=\, \sum_{n\,\in\, \Z} \,\|\Pi_{e_n(x)}\,(x_n)\|^p
    \,\le\, \sum_{n\,\in\, \Z} \,\|x_n\|^p
    \,=\, \|(x_n)_{n\,\in \,\Z})\|_p^p.
\end{align*}
This proves the continuity of $\Gamma$ and also that $\Gamma\big((x_n)_{n\,\in \,\Z}\big)$ belongs to $\ell_p(\mathbb K)$.

To show the surjectivity of $\Gamma$ we just observe that, given $(t_n)_{n\,\in\, \mathbb Z}\in \ell_p( \mathbb K)$, if we consider $u_n = t_n e_n(x)$ for every $n\in \Z$, then 
$$\Gamma\big((u_n)_{n\,\in\, \mathbb Z}\big) \,=\, \Gamma\big((t_n e_n(x))_{n\,\in\, \mathbb Z}\big) \,=\,  \big(t_n\,\Pi_{e_n(x)} \,(e_n(x))\big)_{n\,\in\, \mathbb Z}\,=\,(t_n)_{n \, \in \,\Z}.$$
Moreover, since all vectors $e_n$ are unitary, it is clear that $\|(u_n)_{n\,\in \,\mathbb Z}\|_p = \|(t_n)_{n\,\in\, \mathbb Z}\|_p$.
\smallskip

We are left to prove that $\Gamma\circ \sigma_{\mathcal S}=B_{\omega(x)}\circ \Gamma$. On the one hand, given $(x_n)_{n\,\in\, \mathbb Z} \in \ell_p( X)$, 
\begin{eqnarray*}
\Gamma \,\big(\sigma_{\mathcal S}\,((x_n)_{n\,\in\, \mathbb Z})\big) &=&\Gamma\,\big((S_{n+1}(x_{n+1}))_{n\,\in\, \mathbb Z}\big) \\
&=&  \big(\Pi_{e_n(x)} \big(S_{n+1}(x_{n+1})\big)\big)_{n\,\in\, \mathbb Z}\\
&=& \big(\Pi_{\frac{1}{\omega_{n+1}}\,S_{n+1} \,(e_{n+1}(x))} \big(S_{n+1}(x_{n+1})\big)\big)_{n\,\in\, \mathbb Z}
\end{eqnarray*}
and
\begin{eqnarray*}
B_{\omega(x)}\, \big(\Gamma \,((x_n)_{n\,\in\, \mathbb Z})\big) &=& B_{\omega(x)}\,\big((\Pi_{e_n(x)} (x_n))_{n\,\in\, \mathbb Z}\big) \\
&=& \big(\omega_{n+1} \, \Pi_{e_{n+1}(x)} (x_{n+1})\big)_{n\,\in\, \mathbb Z}.
\end{eqnarray*}
On the other hand, we may write $x_{n+1}$ as a sum
$$x_{n+1} \,= \, \lambda_{n+1} \, e_{n+1}(x) + \gamma_{n+1}$$
where $\lambda_{n+1} \in \R$ and the vector $\gamma_{n+1} \in X$ is orthogonal to $e_{n+1}(x)$. Moreover, for each $b\in \cE\setminus \{x\}$, there exists $\eta_b\in \mathbb K$ such that 
$$
\gamma_{n+1}\, =\, \sum_{b\,\in\, \cE,\; b\,\neq\, x} \; \eta_{b} \; e_{n+1}(b).
$$
Therefore,
\begin{align*}
S_{n+1} \,(x_{n+1}) \,& = \, \lambda_{n+1} \, S_{n+1}\, (e_{n+1}(x)) + 
\sum_{b\,\in \,\cE,\; b\,\neq\, x} \; \eta_{b} \; S_{n+1}\, (e_{n+1}(b))
\end{align*}
where 
$$\big\{S_{n+1}\big(e_{n+1}(b)\big)\colon  \,\,b \in \cE\big\} \,= \,\big\{\omega_{n+1}(b)\,e_{n}(b)\colon \,\,b \in \cE\big\}$$
is a set of pairwise orthogonal vectors by the assumption on the sequence $\mathcal S$. Thus,
$$\Pi_{\frac{1}{\omega_{n+1}(x)}\,S_{n+1} \,(e_{n+1}(x))} \big(S_{n+1}\,(x_{n+1})\big) \, = \, \omega_{n+1}(x)\, \lambda_{n+1} \,=\, \omega_{n+1}(x) \, \Pi_{e_{n+1}(x)} (x_{n+1}).$$
This ends the proof of the theorem.


\subsection{Classification theorems}\label{sse:class}

\subsubsection{Proof of Theorem~\ref{thm:productfactor}}

The statement of the theorem is immediate if the Hilbert space $X$ has  dimension $d = 1$, since in this case the map $\sigma_{\mathcal S}$ is, by definition, a weighted backward shift whose weights are precisely $(\omega_n)_{n \, \in \, \Z}$.
\smallskip

Let $X$ be a Hilbert space with dimension $d \geq 2$ and $\cE = \{b_1, \ldots, b_d\}$ be an orthogonal basis for $(S_n)_{n\, \in\, \Z}$, that is, $\cE_n = \{e_n(x) \colon x\in \cE\}$ is an orthonormal basis on $X$ for every $n \in \Z$. Endow the vector space $\big(\ell_p(\mathbb K)\big)^d$ with the sum norm. Consider the map
$$I\colon \ell_p(X) \,\,\to\,\, \big(\ell_p(\mathbb K)\big)^d$$
given by
\begin{eqnarray*}
I\big((x_n)_{n\,\in\, \Z}\big)
&=&\left(\,
\Pi_{e_n(b_1)}(x_n),
\Pi_{e_n(b_2)}(x_n),
\dots, 
\Pi_{e_n(b_d)}(x_n)\,
\right)_{n\,\in\, \Z}\\
&=& \left(\,
\Gamma_{b_1}\big((x_n)_{n \, \in \, \Z}\big),
\Gamma_{b_2}\big((x_n)_{n \, \in \, \Z}\big),
\dots, 
\Gamma_{b_d}\big((x_n)_{n \, \in \, \Z}\big)\,
\right)_{n\,\in\, \Z}.
\end{eqnarray*}
It is clear that $I$ is linear and injective. 
Moreover, $I$ is surjective and continuous. Indeed, given 
$$\Big(\big(\alpha_{n,1}\big)_{n\,\in\, \Z},\,\big(\alpha_{n,2}\big)_{n\,\in\, \Z},\, \dots,\, \big(\alpha_{n,d}\big)_{n\,\in \Z}\Big) \,\,\in\,\, (\ell_p(\mathbb K))^d$$
let 
$$x_n \,=\,\sum_{i=1}^d \,\alpha_{n,i}\,\,e_n(b_i) \,\, \in X$$
for all $n\in \Z$. By definition, 
$$I((x_n)_{n\,\in\, \Z})=\big((\alpha_{n,1})_{n\,\in\, \Z},(\alpha_{n,2})_{n\,\in\, \Z}, \dots, (\alpha_{n,d})_{n\,\in\, \Z}\big).$$
Moreover, $(x_n)_{n \, \in \, \Z}$ belongs to $ \ell_p(X)$ since
\begin{align*}
\|x_n\|^p
    & \,= \, \Big\|\sum_{i=1}^d \,\alpha_{n,i} \,\,e_n(b_i) \Big\|_X^p \\
    & \,\leq \, \Big(\sum_{i=1}^d \,\Big\|\alpha_{n,i} \,\,e_n(b_i) \Big\|_X\Big)^p \\
    & \,\leq \,\,d^{p-1} \sum_{i=1}^d \,
    \Big\|\alpha_{n,i} \,\,e_n(b_i) \Big\|_X^p \\
    & \,= \,\,d^{p-1} \sum_{i=1}^d \,
    |\alpha_{n,i}|^p
\end{align*}
where the third inequality is due to the fact that, as $p \geq 1$, the map $t \geq 0 \mapsto t^p$ is convex, and in the last equality we use the assumption that $\cE_n$ is a normalized basis. Therefore
\begin{align*}
\sum_{n\,\in \,\Z} \,\|x_n\|^p
    & \,\leq \,\sum_{n\,\in\, \Z} \,d^{p-1} \,\sum_{i=1}^d \, |\alpha_{n,i}|^p \\
    & \,= \, d^{p-1}\,\sum_{i=1}^d  \,\sum_{n\,\in\, \Z}  \,    |\alpha_{n,i}|^{p}  \, < +\infty.
\end{align*}
By the Open mapping theorem, $I$ is an isomorphism. Furthermore, by construction,
\begin{align*}
    (I\circ \sigma_{\mathcal S})\big((x_n)_{n\,\in\, \Z}\big)
    & = I\left(\big(S_{n+1}(x_{n+1})\big)_{n\,\in\, \Z}\right) \\
    & = \left(\Gamma_{b_1}\big((S_{n+1}(x_{n+1}))_{n\,\in\, Z}\big),\,
    \dots, \,
    \Gamma_{b_d}\big((S_{n+1}(x_{n+1}))_{n\,\in\, Z}\big) \right)
    \\ &
    =   \left( B_{\omega(b_1)}\big(\Gamma_{b_1}((x_n)_{n\,\in\, \Z})\big),\,
    \dots, \,
    B_{\omega(b_d)}\big(\Gamma_{b_d}((x_n)_{n\,\in\, \Z}) \big)\right)
    \\
    & = \left(B_{\omega(b_1)} \times B_{\omega(b_2)} \times \dots \times B_{\omega(b_d)}\right)
    \big(I((x_n)_{n\,\in\, Z})\big).
\end{align*}
This completes the proof of the theorem.

\subsubsection{Proof of Theorem~\ref{thm:productfactorBanach}}
Fix $1\leq p < +\infty$ and let $\cE = \{b_1, \ldots, b_d\}$ be a basis of $X$ with $\mathcal S-$bounded projections. For each $n\in \Z$, consider the basis $\cE_n=\{e_n(x) \colon x\in \cE\}$. Given $b\in \cE$, take the linear map 
$$
\begin{array}{rccc}
 \Gamma_b \colon  
 & \ell_p(X) & \quad \to \quad & \mathbb K^{\mathbb Z} \qquad  \\
& (x_n)_{n\,\in\, \mathbb Z} & \quad \mapsto \quad & 
     \big(\Pi_{e_n(b)} \,(x_n)\big)_{n \, \in \, \Z}.
\end{array}
$$
We claim that $\Gamma_b$ is continuous and $\Gamma_b(\ell_p(X))=\ell_p(\mathbb K)$. Indeed, by the $\mathcal S-$bounded projections assumption \eqref{eq:boundednorms}, there exists $C>0$ such that 
\begin{align*}
    \|\Gamma_b\big((x_n)_{n\,\in \,\Z}\big)\|_p^p 
    \,=\, \sum_{n\,\in\, \Z} \,\|\Pi_{e_n(b)} \,(x_n)\|^p
    \,\le\, C^p\, \sum_{n\,\in\, \Z} \,\|x_n\|^p
    \,=\, C^p\, \|(x_n)_{n\,\in \,\Z})\|_p^p.
\end{align*}
This estimate proves the continuity of $\Gamma_b$ and also that $\Gamma_b\big((x_n)_{n\,\in \,\Z}\big) \in \ell_p(\K)$. Conversely, given $(t_n)_{n\,\in\, \mathbb Z}\in \ell_p(\mathbb K)$, the vector $\big(t_n e_n(b)\big)_{n\,\in\, \mathbb Z}$ belongs to $\ell_p(\mathbb K)$ because the elements in $\cE_n$ are normalized, and one has 
$$\Gamma_b\big(\big(t_n e_n(b)\big)_{n\,\in\, \mathbb Z}\big) \, = \, \big(t_n\big)_{n\,\in\, \mathbb Z}.$$

In order to construct the claimed conjugation we will need the following auxiliary result.

\begin{lemma}\label{le:controlnormsBanach}
 Let $X$ be a finite dimensional Banach space, 
$\mathcal S=(S_n)_{n\,\in\, \mathbb Z}$ be a sequence of operators on $X$ and $\cE$ be a basis of $X$ with $\mathcal S-$bounded projections. There exists $\mathcal K_p>0$ such that, for every $(\alpha_b)_{b\,\in\, \cE} \in \mathbb K^{\dim X}$ and $n\in \Z$, one has
$$
\Big\| \sum_{b\,\in\, \cE} \,\alpha_b \; e_n(b) \Big\|^p  \,\le\, \mathcal K_p\,
\sum_{b\,\in\, \cE} \, \left\| \alpha_b \; e_n(b) \right\|^p \,=\, \mathcal K_p \, \sum_{b\,\in\, \cE}\,  |\alpha_b|^p.$$
\end{lemma}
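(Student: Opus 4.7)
The plan is straightforward, since this is an upper bound in a finite-dimensional space where each $e_n(b)$ is, by its very definition in \eqref{eq:vectors}, a unit vector. It turns out that the $\mathcal S$-bounded projections hypothesis is not actually needed for this direction; the assumption is retained for uniformity with the surrounding statements in Section~\ref{sse:class}, and the $\mathcal S$-bounded projections will play their genuine role in the complementary lower bound required to prove Theorem~\ref{thm:productfactorBanach}.

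The first step is to apply the triangle inequality in $X$, using $\|e_n(b)\| = 1$, to obtain
$$\Big\|\sum_{b\,\in\, \cE} \alpha_b\, e_n(b)\Big\| \,\le\, \sum_{b\,\in\, \cE} |\alpha_b|\,\|e_n(b)\| \,=\, \sum_{b\,\in\, \cE} |\alpha_b|.$$
The second step is to pass from an $\ell^1$-sum to an $\ell^p$-sum through the elementary power-mean inequality: applying Jensen's inequality to the convex function $t\mapsto t^p$ (valid for $p\ge 1$) on the uniform measure on $\cE$ yields
$$\Big(\sum_{b\,\in\, \cE}|\alpha_b|\Big)^p \,\le\, (\dim X)^{p-1} \sum_{b\,\in\, \cE}|\alpha_b|^p.$$
Combining the two displays and noting that $\|\alpha_b\,e_n(b)\|^p = |\alpha_b|^p$ produces the lemma with constant $\mathcal K_p \,=\, (\dim X)^{p-1}$, which is manifestly independent of the index $n\in\Z$.

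I do not anticipate any obstacle: this is essentially the same computation that already appears embedded in the proof of Theorem~\ref{thm:productfactor} to verify that the vector $x_n = \sum_{i}\alpha_{n,i}\,e_n(b_i)$ lies in $\ell_p(X)$, and it delivers exactly the same constant $d^{p-1}$. The only thing worth flagging is that isolating this estimate as a standalone lemma is useful precisely because the bound is uniform in $n$ — the point being that the normalization $\|e_n(b)\|=1$ is built into the definition \eqref{eq:vectors}, so no further control over the sequence $\mathcal S$ is required for the upper direction.
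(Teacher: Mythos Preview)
Your proof is correct and follows essentially the same route as the paper's own argument: triangle inequality followed by the $\ell^1$-to-$\ell^p$ comparison via convexity (the paper phrases it as H\"older, you as Jensen, with the identical exponent $(\dim X)^{p-1}=(\dim X)^{p/q}$). You are also right that the $\mathcal S$-bounded projections hypothesis is superfluous for this upper bound---the paper's proof invokes it to insert an extra factor $C^p$, obtaining $\mathcal K_p = C^p(\dim X)^{p-1}$, whereas your constant $(\dim X)^{p-1}$ already suffices.
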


\begin{proof} Consider $1 < p < +\infty$ and take $q \in \N$ such that $\frac{1}{p} + \frac{1}{q} = 1$. Given $x \in X$, write 
$$x \,=\, \sum_{b\,\in\, \cE} \,\alpha_b \; e_n(b) \, = \, \sum_{b\,\in\, \cE} \, \Pi_{e_n(b)} (x).$$    
Then, using the triangular inequality, the assumption \eqref{eq:boundednorms} and H\"older inequality, we get
\begin{align*}
\|x\|^p 
&\, = \,\Big\|\sum_{b\,\in\, \cE} \, \Pi_{e_n(b)}(x)\Big\|^p 
\,\le\, \left(\sum_{b\,\in\, \cE} \,C\, \left\| \, \alpha_b \,e_n(b)\right\|\right)^p  \\
& \,=\, C^p\, \left(\sum_{b\,\in\, \cE} \, |\alpha_b|\cdot 1 \right)^p \\
&\,\le\, C^p \, \left[\left(\sum_{b\,\in\, \cE} \, |\alpha_b|^p\right)^\frac1p \,\left(\sum_{b\,\in\, \cE} \, 1^q\right)^\frac1q \right]^p \\
&\,\le\, C^p \, \big(\dim X\big)^\frac{p}{q} \; \sum_{b\,\in\, \cE} \, |\alpha_b|^p \\
& \,=\, C^p\, \big(\dim X\big)^{p\,(p-1)}\; \sum_{b\,\in \,\cE} \, |\alpha_b|^p.
\end{align*}

In case $p=1$, given $x \in X$ written as
$$x \,=\, \sum_{b\,\in\, \cE} \,\alpha_b \; e_n(b) \, = \, \sum_{b\,\in\, \cE} \, \Pi_{e_n(b)} (x)$$ 
then, using the triangular inequality and the assumption \eqref{eq:boundednorms}, we get
$$\|x\| \, = \,\Big\|\sum_{b\,\in\, \cE} \, \Pi_{e_n(b)}(x)\Big\| \,\le\,\sum_{b\,\in\, \cE} \,C\, \left\| \, \alpha_b \,e_n(b)\right\| \,=\, C\, \sum_{b\,\in\, \cE} \, |\alpha_b|.$$
\end{proof}

\medskip
Consider the map $I\colon \ell_p(X) \to (\ell_p(\mathbb K))^d$ defined by
\begin{eqnarray*}
I\big((x_n)_{n\,\in\, \Z}\big)
&=&\left(\big(\Pi_{e_n(b_1)}(x_n)\big)_{n\,\in\, \Z}, \,
\big(\Pi_{e_n(b_2)}(x_n)\big)_{n\,\in\, \Z},\,
\dots, \,\big(\Pi_{e_n(b_d)}(x_n)\big)_{n\,\in\, \Z}\,
\right)\\
&=& \left(\Gamma_{b_1}\big((x_n)_{n \, \in \, \Z}\big),\,
\Gamma_{b_2}\big((x_n)_{n \, \in \, \Z}\big),\,
\dots, \,
\Gamma_{b_d}\big((x_n)_{n \, \in \, \Z}\big)
\right)_{n\,\in\, \Z}
\end{eqnarray*}
By construction, $I$ is linear, injective and continuous. Moreover, $I$ is surjective since, given 
$$\Big((\alpha_{n,1})_{n\,\in\, \Z},(\alpha_{n,2})_{n\,\in\, \Z}, \dots, (\alpha_{n,d})_{n\,\in\, \Z}\Big) \,\in\, (\ell_p(\mathbb K))^d$$
one has 
$$I\Big(\Big(\sum_{i=1}^d \alpha_{n,i} \,e_n(b_i)\Big)_{n\,\in\,\Z}\Big) \,=\,\Big((\alpha_{n,1})_{n\,\in\, \Z},(\alpha_{n,2})_{n\,\in\, \Z}, \dots, (\alpha_{n,d})_{n\,\in\, \Z}\Big)
$$
and, by using Lemma~\ref{le:controlnormsBanach}, 
$$ \sum_{n\,\in\, \Z}\, \Big\|\sum_{i=1}^d \,\alpha_{n,i} \,e_n(b_i) \Big\|^p \,\le\,  \sum_{n\,\in\, \Z} \, \mathcal K_p\, \sum_{i=1}^d \, |\alpha_{n,i}|^p 
 \,=\, \mathcal K_p \sum_{i=1}^d  \,\sum_{n\,\in\, \Z}  \,
    |\alpha_{n,i}|^p \, <\,+\infty.$$
Therefore, $I$ is an isomorphism. Furthermore, by construction,
\begin{align*}
    (I\circ \sigma_{\mathcal S})\big((x_n)_{n\,\in\, \Z}\big)
    & = I\left(\big(S_{n+1}(x_{n+1})\big)_{n\,\in\, \Z}\right) \\
    & = \left(\Gamma_{b_1}\big((S_{n+1}(x_{n+1}))_{n\,\in\, Z}\big),\, \dots, \,
    \Gamma_{b_d}\big((S_{n+1}(x_{n+1}))_{n\,\in\, Z}\big) \right)
    \\ &
    =   \left( B_{\omega(b_1)}\big(\Gamma_{b_1}((x_n)_{n\,\in\, \Z})\big),\,  \dots, \,
    B_{\omega(b_d)}\big(\Gamma_{b_d}((x_n)_{n\,\in\, \Z}) \big)\right)
    \\
    & = \left(B_{\omega(b_1)} \times B_{\omega(b_2)} \times \dots \times B_{\omega(b_d)}\right)
    \big(I((x_n)_{n\,\in\, Z})\big).
\end{align*}
This finishes the proof of the theorem.

\subsubsection{Proof of Corollary~\ref{cor:separatedbasis-2}}

Let $X$ be a finite dimensional Hilbert space with dimension $d \geq 2$ and assume that the sequence $\mathcal S=(S_n)_{n\,\in \,\Z}$ has a normalized basis $\cE = \{v_1, \cdots, v_d\}$ of $X$ for which there exists $0 < \gamma < 1/(d-1)$ such that, for every $n \in \Z$, the angle $\measuredangle(u, v)$ between distinct vectors $u,v \in \cE_n$ satisfies the condition
$$ \cos \measuredangle(u, v) \,\in\, [-\gamma, \gamma].$$ 

Consider $x \in X \setminus\{0\}$ whose coordinates with respect to the basis $\cE_n$ are
$$x \,= \,\alpha_{n,1} \,e_n(v_1) + \cdots + \alpha_{n,d} \,e_n(v_d)$$
where $\alpha_{n,i} \in \K$ for every $1 \leq i \leq d$. Then 
\begin{eqnarray*}
\|x\|_X^2 &=& \|\alpha_{n,1}\, e_n(v_1) + \cdots + \alpha_{n,d} \,e_n(v_d)\|_X^2  \\
&=& \|\alpha_{n,1}\|^2 + \cdots + \|\alpha_{n,d}\|^2 \,\,+ \sum_{1 \,\leq\, i, \,  j\, \leq \,d} \,\alpha_{n,i}\,\,\overline{\alpha_{n,j}} \,<e_n(v_i), e_n(v_j)> \\
&=& \|\alpha_{n,1}\|^2 + \cdots + \|\alpha_{n,d}\|^2 \,\,+ \sum_{1 \,\leq\, i\,< \,  j\, \leq \,d} \,2\, \mathfrak{Re}\big(\alpha_{n,i}\,\,\overline{\alpha_{n,j}}\big) \,\cos \theta_{n,i,j} \\
&\geq& \|\alpha_{n,1}\|^2 + \cdots + \|\alpha_{n,d}\|^2 \,\,- \sum_{1 \,\leq\, i\,< \,  j\, \leq \,d} \,2\, |\mathfrak{Re}\big(\alpha_{n,i}\,\,\overline{\alpha_{n,j}}\big)| \,|\cos \theta_{n,i,j}| \\
&\geq& \|\alpha_{n,1}\|^2 + \cdots + \|\alpha_{n,d}\|^2 \,\,- \sum_{1 \,\leq\, i\,< \,  j\, \leq \,d} \,2\, \|\alpha_{n,i}\,\,\overline{\alpha_{n,j}}\| \,|\cos \theta_{n,i,j}| \\
&=&  \|\alpha_{n,1}\|^2 + \cdots + \|\alpha_{n,d}\|^2 \,\,- \sum_{1 \,\leq\, i\,< \,  j\, \leq \,d} \,2\, \|\alpha_{n,i}\|\,\|\alpha_{n,j}\| \,|\cos \theta_{n,i,j}| \\
&\geq& \|\alpha_{n,1}\|^2 + \cdots + \|\alpha_{n,d}\|^2 \,\,- \sum_{1 \,\leq\, i\,< \,  j\, \leq \,d} \, 2\,\|\alpha_{n,i}\| \,\|\alpha_{n,j}\|\, \gamma \\
&\geq& \|\alpha_{n,1}\|^2 + \cdots + \|\alpha_{n,d}\|^2 \,\,- \sum_{1 \,\leq\, i\, <\, j\, \leq \,d} \, \big(\|\alpha_{n,i}\|^2 + \|\alpha_{n,j}\|^2\big)\, \gamma \\
&\geq& \sum_{i=1}^d \,\|\alpha_{n,i}\|^2 \,\Big(1 - \sum_{j \neq i} \gamma\Big) \\
&=&  \Big(\|\alpha_{n,1}\|^2 + \cdots + \|\alpha_{n,d}\|^2\Big) \, \Big(1 - \big(d-1\big)\,\gamma\Big).
\end{eqnarray*}

\noindent Observe now that, by the choice of $\gamma$, one has $1 - \big(d-1\big)\,\gamma > 0.$ Consequently, for every $n \in \Z$ and $j \in \{1,\cdots, d\}$, 
\begin{eqnarray*}
\sup_{x \,\neq \,0} \,\frac{\|\Pi_{e_n(v_j)}(x)\|_X}{\|x\|_X} &=& \frac{\|\alpha_{n,j}\|}{\|x\|_X} \\
&\leq& \frac{\|\alpha_{n,j}\|}{\sqrt{\Big(1 - \big(d-1\big)\,\gamma\Big)  \,\Big(\|\alpha_{n,1}\|^2 + \cdots + \|\alpha_{n,d}\|^2\Big)}} \\
&\leq& \frac{1}{\sqrt{\Big(1 - \big(d-1\big)\,\gamma\Big)}}.
\end{eqnarray*}
That is, the basis $\cE$ has $\mathcal S-$bounded projections. Therefore, by Theorem~\ref{thm:productfactorBanach}, for every $1 \le p < +\infty$ the shift operator $\sigma_{\mathcal S}\colon \ell_p(X) \to \ell_p(X)$ is conjugate to the product of weighted backward shifts $\prod _{x \,\in\, \cE} B_{\omega(x)} \colon \big(\ell_p(\mathbb K)\big)^{\dim X} \to \big(\ell_p(\mathbb K)\big)^{\dim X}$. The proof of the corollary is complete.

\bigskip

We note that, when $d=2$, the demand on the angles 
$\big\{\theta_{n,i,j}\colon n \in \Z,\, 1 \leq i \neq j \leq 2\big\}$ in the previous proof reduces to requesting that they do not approach neither $0$ nor $\pi$. The next example shows that, when $d > 2$, this is no longer enough to guarantee that the basis $\cE$ has $\mathcal S-$bounded projections and we need a sharper uniform bound. 

\begin{example}[Unbounded projections]\label{ex:class}
\emph{Assume that $d = 3$ and $\mathbb K = \mathbb R$. Given a suf\-ficiently small $\delta >0$, we may choose a normalized basis $\cE_\delta = \{e_1, e_2, e_3\}$ of $\R^3$ with angles $\theta_{i,j}$ sufficiently close to $2\pi/3$ so that $\cos \,(\theta_{1,2}) = -1/2$, 
$\cos\, (\theta_{1,3}) = \cos\, (\theta_{2,3}) =-1/2 + \delta$. If  $x = \alpha_1\,e_1 + \alpha_2\, e_2 + \alpha_3 \, e_3$, then 
\begin{eqnarray*}
\|x\|^2 &=& \|\alpha_1\, e_1 + \alpha_2\, e_2 + \alpha_3 \,e_3\|^2  \\
&=& \alpha_1^2 + \alpha_2^2 + \alpha_3^2  + 2\, \alpha_1\,\alpha_2 \cos\,(\theta_{1,2})\, + 2\, \alpha_1\,\alpha_3 \cos\,(\theta_{1,3}) + 2\, \alpha_2\,\alpha_3 \cos\,(\theta_{2,3}) \\
&=& \alpha_1 \big(\alpha_1 + 2\alpha_2 \cos\,(\theta_{1,2}) \big) \,+\, \alpha_2 \big(\alpha_2 \,+\,  2 \alpha_3 \cos\,(\theta_{2,3}) \big) \, + \, \alpha_3 \big(\alpha_3  \, + 2 \,\alpha_1\cos\,(\theta_{1,3})\big).
\end{eqnarray*}
In the particular case of $\alpha_1 =\alpha_{2} = \alpha_{3} = 1$ and $x_0 = e_1 + e_2 + e_3$, we get
$$\|x_0\|^2 \,=\, 0 + 1 (2 \delta) + 1 (2\delta) \,\, = 4 \delta\,\,$$
and so, for $1 \leq j \leq 3$,
$$\sup_{x \,\neq \,0} \,\frac{\|\Pi_{e_j}(x)\|}{\|x\|} \,\geq\, \frac{\|\Pi_{e_j}(x_0)\|}{\|x_0\|}  \, = \, \frac{|\alpha_j|}{\|x_0\|} \,=\, \frac{1}{4 \delta}.$$
As $\delta$ may be chosen arbitrarily small, the projection $\Pi_{e_j}$ is not uniformly bounded independent of $\delta$. We note that, as $\delta$ goes to $0$, the space generated by $\{e_1, e_2, e_3\}$ becomes 2-dimensional, so the angle between $e_3$ and the plane generated by 
$\{e_1, e_2\}$ approaches $0$. }
\end{example}


\subsubsection{Proof of Corollary~\ref{cor:angles}}

The next proposition provides another sufficient condition for a sequence $\mathcal S$ of bounded linear invertible operators to have $\mathcal S-$bounded projections. Corollary~\ref{cor:angles} is an immediate consequence of this proposition and  Theorem~\ref{thm:productfactorBanach}.

\begin{proposition}\label{prop:angles}
Let $X$ be a finite dimensional Hilbert space, $\cE=\{v_1,v_2, \dots, v_d\}$ be a basis of unit vectors of $X$ and $\mathcal S = (S_n)_{n\,\in\, \Z}$ be a sequence of bounded linear invertible operators on $X$. For each $1\le i \le d$, consider the vector space $F_{n,i} \subset X$ generated by the vectors $\big\{e_n(v_j) \colon 1\le j \le d, \; j\neq i\big\}$. If
\begin{equation}\label{eq:angles}
\inf_{n\,\in\, \mathbb Z, \,\, 1\,\le\, i\, \le\, d} \,\, \measuredangle (e_n(v_i),F_{n,i}) \,> \,0
\end{equation}
then the basis $\cE$ has $\mathcal S-$bounded projections.
\end{proposition}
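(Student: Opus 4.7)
The plan is to exploit the Hilbert space structure to convert the angle condition into a uniform lower bound on the orthogonal component of each $e_n(v_i)$ relative to the hyperplane $F_{n,i}$, and then to translate that into the desired bound on $\|\Pi_{e_n(v_i)}\|$. First I would observe that since each $S_n$ is invertible and $\cE$ is a basis of $X$, the set $\cE_n = \{e_n(v_j) : 1\le j\le d\}$ is also a basis of $X$ for every $n \in \Z$, so that the decomposition $X = \mathrm{span}(e_n(v_i)) \oplus F_{n,i}$ makes sense and the projection $\Pi_{e_n(v_i)}\colon X \to \mathrm{span}(e_n(v_i))$ appearing in \eqref{def:projBanach} is well defined.

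Next, denote $\theta_{n,i} = \measuredangle(e_n(v_i), F_{n,i})$ and $\theta = \inf_{n,i}\,\theta_{n,i}$, which is strictly positive by hypothesis \eqref{eq:angles}. Since $F_{n,i}$ is a subspace, replacing $w$ by $-w$ shows that $\theta_{n,i}\in (0,\pi/2]$, whence $\sin\theta_{n,i}\ge \sin\theta>0$. Let $P_{n,i}\colon X \to F_{n,i}$ be the \emph{orthogonal} projection onto $F_{n,i}$ (available because $X$ is Hilbert) and set $p_{n,i} = P_{n,i}(e_n(v_i))$ and $w_{n,i} = e_n(v_i) - p_{n,i}$, so that $w_{n,i}\perp F_{n,i}$. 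A standard computation gives $\cos\theta_{n,i} = \|p_{n,i}\|$ and hence $\|w_{n,i}\| = \sin\theta_{n,i}\ge \sin\theta$.

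Now fix $x\in X$ and write its decomposition along the basis $\cE_n$ as $x = \alpha_i\, e_n(v_i) + y$ with $y\in F_{n,i}$ and $\alpha_i\in\K$. Substituting $e_n(v_i) = p_{n,i}+w_{n,i}$ yields
\[
x \,=\, \bigl(\alpha_i\, p_{n,i} + y\bigr) + \alpha_i\, w_{n,i},
\]
where the first summand lies in $F_{n,i}$ and the second is orthogonal to $F_{n,i}$. Pythagoras' identity then gives
\[
\|x\|^2 \,=\, \|\alpha_i\, p_{n,i} + y\|^2 + |\alpha_i|^2\,\|w_{n,i}\|^2 \,\ge\, |\alpha_i|^2\,\sin^2\theta,
\]
so that $\|\Pi_{e_n(v_i)}(x)\| = |\alpha_i| \le \|x\|/\sin\theta$. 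Taking supremum over $x\neq 0$, $n\in \Z$ and $1\le i\le d$ yields
\[
\sup_{n\,\in\,\Z}\,\sup_{b\,\in\,\cE_n}\,\|\Pi_{b}\| \,\le\, \frac{1}{\sin\theta} \,<\, +\infty,
\]
which is precisely the $\mathcal S$-bounded projections condition \eqref{eq:boundednorms}.

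The argument is essentially routine Hilbert-space geometry; the only subtlety worth flagging is the passage from the angle lower bound to the orthogonal-component bound. That relies on identifying $\measuredangle(u,F)$ with $\arccos\|P_F(u)\|$ and noting that the infimum in the definition is automatically in $[0,\pi/2]$ (by the sign ambiguity $w\leftrightarrow -w$ in $F$), so that $\sin\theta > 0$ whenever $\theta>0$. Apart from this, no estimate depends on the dimension $d$, which is the feature that distinguishes Corollary~\ref{cor:angles} from Corollary~\ref{cor:separatedbasis-2}.
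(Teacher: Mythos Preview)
Your argument is correct and, in fact, slightly cleaner than the paper's. Both proofs start from the same oblique decomposition $x=\alpha_i\,e_n(v_i)+f$ with $f\in F_{n,i}$, but you then orthogonally decompose $e_n(v_i)=p_{n,i}+w_{n,i}$ relative to $F_{n,i}$ and apply Pythagoras directly, obtaining $\|x\|\ge |\alpha_i|\,\|w_{n,i}\|=|\alpha_i|\sin\theta_{n,i}$ in one line. The paper instead keeps the oblique decomposition, bounds the cross term in $\|x\|^2$ by $2|\alpha_i|\,\|f\|\,|\cos\theta|$, and then minimizes the resulting quadratic in $t=\|f\|/|\alpha_i|$; the minimum value is $2\beta-\beta^2$ with $\beta=1-|\cos\theta|$, which after simplification is exactly $\sin^2\theta$. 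So the two routes yield the identical constant $1/\sin\theta$; yours just arrives there without the optimization step. Your explicit remark that $\measuredangle(v,F)\in(0,\pi/2]$ because $F$ is a subspace is a useful clarification that the paper leaves implicit.
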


\begin{proof}
From the assumption~\eqref{eq:angles}, we deduce that there exists $\theta \in \,\,]0, \pi[$ such that, for every $n \in \mathbb Z$ and $1 \le i \le d$, one has
$$\big|\cos \big(\measuredangle (e_n(v_i),F_{n,i})\big)\big|\,\leq\, \big|\cos\, \theta\big| \, < \, 1.$$
Fix $n \in \N$ and $1 \le i \le d$. For each $x\in X$, write $x = \alpha_i \, e_n(v_i) + f_{n,i}$, where $f_{n,i} \in F_n^i$ is unique. In what follows, assume that $x \neq 0$. 
Using the estimate
\begin{align*}
\|x\|^2   & \,\ge\,  \|\alpha_i\|^2 \, \|e_n(v_i)\|^2 + 
    \|f_{n,i}\|^2 - 2\|e_n(v_i)\|\, \|f_{n,i}\| \,|\cos \theta| \\
    & \,= \,\|\alpha_i\|^2 + \|f_{n,i}\|^2 - 2\|f_{n,i}\|\, |\cos \theta|
    \,>\, 0 
\end{align*}
we obtain
\begin{align}\label{eq:estimate}
\frac{\|\Pi_{e_n(v_i)}(x)\|^2}{\|x\|^2}
    & \,\le\, \frac{\|\alpha_i\|^2}{\|\alpha_i\|^2 + 
    \|f_{n,i}\|^2 - 2\, \|\alpha_i\|\, \|f_{n,i}\|\, |\cos \theta|} \nonumber \\
    &\smallskip \nonumber \\
    & \,\le\, \frac{\|\alpha_i\|^2}{\big(\|\alpha_i\| - \|f_{n,i}\|\big)^2 
    + \big(2 - 2\,|\cos \theta|\big)\, \|\alpha_i\| \, \|f_{n,i}\|}.
\end{align}
If $\alpha_i=0$, the previous quotient is zero and there is nothing to prove. If $\|\alpha_i\|>0$, writing
\begin{eqnarray*}
t&=& \frac{\|f_{n,i}\|}{\|\alpha_i\|} \\ 
\beta &= & 1 - |\cos \theta| \, > \,0
\end{eqnarray*}
then \eqref{eq:estimate} becomes 
\begin{align*}
\frac{\|\Pi_{e_n(v_i)}(x)\|^2}{\|x\|^2}
    & \,\le\, \frac{\|\alpha_i\|^2}{\big(1-t\big)^2 \,\|\alpha_i\|^2 
    + 2 \beta t\, \|\alpha_i\|^2} \\
    & \,\le\, \frac{1}{\big(1-t\big)^2 + 2 \beta t} \\
    & \,\le\,  \frac1{\min_{s\,\geq \,0} \; \big\{\big(s+\beta-1\big)^2 + 2 \beta-\beta^2\big\}} \\
    & \, =\,  \frac1{2 \beta-\beta^2}.
\end{align*}
Thus, $\Pi_{e_n(v_i)}$ is uniformly bounded, independently on $n$ and $i$. This proves the proposition.
\end{proof}

\smallskip


Example \ref{ex:class} shows that, without the assumption \eqref{eq:angles}, the conclusion of Proposition~\ref{prop:angles} may fail.

\subsection{Proof of Corollary~\ref{cor:equiP}}

Let $X$ be a d-dimensional Hilbert space and $\cE $ be an orthogonal basis for $(S_n)_{n \,\in \,\Z}$. In what follows, let property $\mathcal P$ stand for transitivity, mixing, or shadowing. Under the assumptions of Corollary~\ref{cor:equiP}, Theorem~\ref{thm:productfactor} ensures that the shift operator $\sigma_{\mathcal S}$ is conjugate to the product of weighted backward shifts $\prod _{x \,\in\, \cE} B_{\omega(x)} \colon (\ell_p(\mathbb K))^d \to (\ell_p(\mathbb K))^d$. 
Hence, Corollary~\ref{cor:equiP} is a direct consequence of the next lemma.

\begin{lemma}\label{le:productsP}
Let $Y$ be a Banach space and $T_i\colon  Y \to Y$, $1\le i \le d$, be a sequence of invertible linear operators. Consider $T=T_1\times T_2 
\times \dots \times T_d \colon Y^d\to Y^d$, where $Y^d$ is endowed with the sum norm. Then
$$T \text{ has property $\mathcal P$} \quad \Rightarrow \quad \big\{T_i\colon \,1\le i \le d\big\} \,\text{ has equi-property $\mathcal P$}.$$
\end{lemma}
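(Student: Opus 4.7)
The plan is to handle the three properties one at a time, exploiting in each case a tailored embedding of a single coordinate into the product. Throughout, write points of $Y^d$ as $\mathbf{y}=(y^{(1)},\dots,y^{(d)})$ and recall that the sum norm satisfies $\|y^{(i)}\|_Y\le\|\mathbf{y}\|_{Y^d}$ for every coordinate.

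For equi-transitivity, given non-empty open sets $U,V\subset Y$, I would set $\widetilde U = U\times U\times\cdots\times U$ and $\widetilde V = V\times V\times\cdots\times V$, which are non-empty open subsets of $Y^d$. Applying transitivity of $T$ to $\widetilde U,\widetilde V$ produces a single integer $n\in\mathbb N$ and a point $(u_1,\dots,u_d)\in\widetilde U$ such that $(T_1^n(u_1),\dots,T_d^n(u_d))\in\widetilde V$. Reading coordinates, $T_i^n(U)\cap V\neq\emptyset$ for every $1\le i\le d$ with the same $n$, which is precisely equi-transitivity. The equi-mixing case is identical: mixing of $T$ yields a common $N\in\mathbb N$ valid for every $n\ge N$, and the coordinate projection of the witness gives the desired $N$ simultaneously for all $T_i$.

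For equi-shadowing, let $K>0$ be the shadowing constant for $T$. Given any $i\in\{1,\dots,d\}$ and any bounded sequence $(z_n)_{n\in\mathbb Z}$ in $Y$, I would embed it into $Y^d$ via
\[
\widetilde z_n \,=\, (0,\dots,0,\,z_n,\,0,\dots,0)
\]
with $z_n$ in the $i$-th slot. Since $\|\widetilde z_n\|_{Y^d}=\|z_n\|_Y$, the sequence $(\widetilde z_n)$ is bounded, and shadowing of $T$ yields a bounded sequence $(\widetilde x_n)_{n\in\mathbb Z}$ in $Y^d$ with $\widetilde x_{n+1}-T(\widetilde x_n)=\widetilde z_n$ and $\sup_n\|\widetilde x_n\|_{Y^d}<K\sup_n\|z_n\|_Y$. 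Writing $\widetilde x_n=(x_n^{(1)},\dots,x_n^{(d)})$ and reading off the $i$-th coordinate gives $x^{(i)}_{n+1}-T_i(x^{(i)}_n)=z_n$, with $\sup_n\|x^{(i)}_n\|_Y\le \sup_n\|\widetilde x_n\|_{Y^d}<K\sup_n\|z_n\|_Y$. The same constant $K$ serves every index $i$ and every bounded input sequence, which is the equi-shadowing property for $\{T_i\colon 1\le i\le d\}$.

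There is essentially no obstacle; the only point that deserves care is checking that the embedding of a single-coordinate perturbation behaves well with respect to the sum norm, so that the shadowing constant transfers without loss. The other two cases are purely set-theoretic consequences of $T$ acting diagonally on product sets.
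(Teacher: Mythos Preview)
Your proof is correct and follows essentially the same approach as the paper: for transitivity and mixing you use the diagonal product sets $U\times\cdots\times U$ and $V\times\cdots\times V$ and project to each coordinate, and for shadowing you embed the perturbation sequence into a single coordinate slot, apply shadowing of $T$, and read off the $i$-th component, noting that the sum norm ensures the same constant $K$ works for every $i$. The paper's argument is identical in structure and detail.
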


\begin{proof}
Assume that $\mathcal P$ is transitivity or mixing. Given any non-empty open sets $U,V\subset Y$, it suffices to apply the corresponding property for the operator $T$ and the open subsets $U\times U \times \dots \times U$ and $V\times V \times \dots \times V$ of $Y^d$ to obtain the equi-property $\mathcal P$ for the class of operators $\{T_i\colon \,1\le i \le d\}$ acting on $Y$.
\smallskip

Suppose now that $\mathcal P$ is the shadowing property. By definition, there is $K >0$ which witnesses the shadowing property of $T$. We claim that the same $K$ verifies equi-shadowing for $\{T_i\colon \,1\le i \le d\}$. Indeed, for $1 \le i \le d$, for any bounded sequence $\big(z^i_n\big)_{n \,\in\, \Z}$ in $Y$, there is a bounded sequence $\big(x_n\big)_{n \,\in \,\Z}$ in $Y^d$ satisfying 
\[ x_{n+1}-T(x_n) \,= \,(0, \ldots, 0, z^i_n, 0, \ldots, 0 )\quad \quad \text{and} \quad \quad  \sup_{n\, \in\, \Z} \,\|x_n\| < K \sup_{n \,\in\, \Z} \|z_n^i\|.\] 
Letting $x^i_n$ be the $i$th coordinate of $x_n$, we get
\[ x^i_{n+1}-T_i(x^i_n) \,=\,  z^i_n, \quad \quad \text{and} \quad \quad  \sup_{n\, \in\, \Z} \,\|x^i_n\| < K \sup_{n \,\in\, \Z} \,\|z_n^i\|\]
completing the proof of the lemma.
\end{proof}


\section{Open questions}\label{se:open}
We showed in Corollary~\ref{cor:GenHypShadowing} that, under an extra assumption, the answer to the following questions is yes. However, the general problem remains open.
\begin{question}
For a general shift operator, is the shadowing property equivalent to generalized hyperbolicity?
\end{question}

Example~\ref{discontinuousprojections} shows that there are shift operators which are not linearly conjugate to a finite product of particular types of weighted shifts. Yet, the following remains open.
\begin{question}
Are there shift operators, determined by sequences of matrices on finite dimensional Hilbert spaces, such that $\sigma_S$ is not linearly conjugate to a finite product of shifts?
\end{question}

Weighted backward shifts on $\ell_p(\R)$ does not distinguish Devaney chaos from frequent hypercyclicity \cite{BayarRuzsa2015,CharpentierEtAl2023}. It would be interesting to have a negative solution to the following question.
\begin{question}
Do the notions of Devaney chaos and frequent hypercyclicity coincide for shift operators on $\ell_p(X)$? The answer is yes when $X = \R$.
    
\end{question}

\bibliographystyle{siam}
\bibliography{biblio}

\bigskip
\bigskip

\noindent Maria Carvalho\\ 
CMUP \& Departamento de Matem\'atica,\\ 
Faculdade de Ci\^encias da Universidade do Porto, \\
Rua do Campo Alegre s/n, 4169-007 Porto, Portugal.\\
E-mail: mpcarval@fc.up.pt\\

\noindent Udayan B. Darji\\ 
Department of Mathematics,\\ 
University of Louisville,\\
Louisville KY 40292, USA.\\
E-mail: ubdarj01@louisville.edu\\

\noindent Paulo Varandas\\ CMUP \& Faculdade de Ci\^encias da Universidade do Porto, \\
Rua do Campo Alegre s/n, 4169-007 Porto, Portugal.\\ 
Departamento de Matem\'atica e Estat\'istica,\\
Universidade Federal da Bahia,\\
Av. Ademar de Barros s/n, 40170-110 Salvador, Brazil.\\
E-mail: paulo.varandas@ufba.br

\end{document}